\documentclass[12pt]{article}

\usepackage{lipsum} 
\usepackage{hyperref} 
\usepackage{authblk} 
\usepackage{geometry} 
\usepackage{graphicx}
\usepackage{multirow}
\usepackage{amsmath,amssymb,amsfonts}
\usepackage{amsthm}
\usepackage{mathrsfs}
\usepackage[title]{appendix}
\usepackage{xcolor}
\usepackage{textcomp}
\usepackage{manyfoot}
\usepackage{booktabs}
\usepackage{algorithm}
\usepackage{algorithmicx}
\usepackage{algpseudocode}
\usepackage{listings}
\usepackage[numbers,sort&compress]{natbib}
\usepackage{physics}
\usepackage{enumitem}
\usepackage{subcaption}
\usepackage{float}

\newcommand{\keywords}[1]{\par\noindent\textbf{Keywords:} #1}

\theoremstyle{plain}
\newtheorem{theorem}{Theorem}[section]

\newtheorem{corollary}[theorem]{Corollary}
\newtheorem{lemma}[theorem]{Lemma}

\theoremstyle{definition}

\newtheorem{remark}{Remark}[section]
\newtheorem{definition}{Definition}[section]

\raggedbottom

\title{An Operator-Theoretic Framework for the  Optimal Control Problem of Nonlinear Caputo Fractional Systems}
\author[1]{Dev Prakash Jha}
\author[2]{Raju K George}
\affil[1,2]{\small\textit{Mathematics, Indian Institute of Space Science and Technology,}\\ \small\textit{Valiamala, Thiruvananthapuram 695547, Kerala, India}}

\date{\today}

\begin{document}

\maketitle

\begin{abstract}
This paper addresses the optimal control problem for a class of nonlinear fractional systems involving Caputo derivatives and nonlocal initial conditions. The system is reformulated as an abstract Hammerstein-type operator equation, enabling the application of operator-theoretic techniques. Sufficient conditions are established to guarantee the existence of mild solutions and optimal control-state pairs. The analysis covers both convex and non-convex scenarios through various sets of assumptions on the involved operators. An optimality system is derived for quadratic cost functionals using the Gâteaux derivative, and the connection with Pontryagin-type minimum principles is discussed. Illustrative examples demonstrate the effectiveness of the proposed theoretical framework.
\end{abstract}

\keywords{Semi-linear systems, Fractional calculus, Mild solution, Non-autonomous systems,  approximate controllability, Evolution operator, Schauder's fixed-point theorem, Ascoli-Arzelà theorem}

\section{Introduction}
\label{sec:intro}
Fractional differential equations have gained prominence as essential tools for modeling a wide range of phenomena in science and engineering. These equations are widely utilized in fields such as viscoelasticity, electrochemistry, control systems, porous media, and electromagnetics, among others (see \cite{metzler1995relaxation,mainardi1997fractional,hilfer2000applications,gaul1991damping,diethelm1999solution}). Over the past few years, considerable progress has been made in both the theoretical and practical aspects of fractional differential equations, as highlighted in the monographs by Kilbas et al. \cite{kilbas2006theory}, Miller and Ross \cite{miller1993introduction}, and Podlubny \cite{podlubny1998fractional}, along with several research papers \cite{jha2024exact,jha2024existence,sakthivel2011approximate,liu2015approximate,zhou2009existence} and related references.

Throughout this paper, unless explicitly mentioned otherwise, we adopt the following notations. We consider $E$ as a Hilbert space endowed with the norm $\|\cdot\|$. Define $J = [0, a] \subset \mathbb{R}$, and let $C(J, E)$ represent the Banach space of continuous functions mapping $J$ to $E$, equipped with the norm $\|u\|_C = \sup_{t \in J} \|u(t)\|$ for all $u \in C(J, E)$. The primary aim of this paper is to derive sufficient conditions ensuring the approximate controllability of a specific class of abstract fractional evolution equations (FEE) governed by a control function of the form:

\begin{equation}\label{eq:P}\tag{P}
	\begin{cases}
		{}^{C}D^{\alpha} u(t) = Au(t) + Bv(t) + f(t, u(t)), & t \in J, \\
		u(0) = \sum_{k=1}^{m} c_k u(t_k),
	\end{cases}
\end{equation}
The state variable \( u(\cdot) \) takes values in the Hilbert space \( E \), while \( {}^{C}D^\alpha \) represents the Caputo fractional derivative of order \( 0 < \alpha < 1 \). The operator \( A \) serves as the infinitesimal generator of a strongly continuous semigroup \( T(t) \) consisting of bounded operators on \( E \). The control function \( v(\cdot) \) belongs to the Hilbert space \( L^2(J, U) \), where \( U \) is another Hilbert space. Additionally, the operator \( B \) is a bounded linear mapping from \( U \) into \( H := L^2([0,a],E) \). The function \( f : J \times E \to E \) is satisfies the Caratheodory Condition. Finally, for any \( k = 1, 2, 3, \dots, m \), the values \( u(t_k) \) belong to \( E \).

On the other hand, in certain physical applications, employing a nonlocal initial condition has been found to be more effective than the traditional initial condition \( u(0) = u_0 \). For instance, Deng \cite{deng1993exponential} utilized the nonlocal condition in system (\ref{eq:P}) to model the diffusion of a small quantity of gas within a transparent tube in 1993. In this context, the system's condition (\ref{eq:P}) enables additional measurements at specific time points \( t_k \) (\( k = 1, 2, \dots, m \)), thereby providing more accurate data compared to relying solely on measurements taken at \( t = 0 \).

Moreover, Byszewski \cite{byszewski1999existence} demonstrated in 1999 that when \( c_k \neq 0 \) for \( k = 1, 2, \dots, m \), the approach can be applied in kinematics to analyze the trajectory of a physical object, \( t \mapsto u(t) \). This remains valid even if the positions \( u(0), u(t_1), \dots, u(t_m) \) are unknown, as long as the nonlocal condition in system (\ref{eq:P}) is satisfied.

Therefore, when analyzing certain physical phenomena, a nonlocal condition may provide a more accurate representation than the conventional initial condition \( u(0) = u_0 \). The significance of nonlocal conditions has been extensively studied in \cite{jha2024exact,ezzinbi2007existence,liang2015controllability,wang2017approximate,byszewski1991theorems,boucherif2009semilinear}.


\section{Preliminaries}

\label{sec:prel}
In this section, we present essential notations, definitions, and fundamental concepts that will be utilized throughout this paper. Let $E$ and $U$ be Hilbert spaces equipped with the norms $\lVert \cdot \rVert$ and $\lVert \cdot \rVert_{U}$, respectively. The space of all continuous functions mapping the interval $J$ into $E$ is denoted by $C(J,E)$, which forms a Hilbert space when endowed with the supremum norm:
\begin{equation}\label{eq:pre_1}
	\lVert u\rVert_C= \sup_{t \in J} \lVert u(t)\rVert, \quad u \in C(J,E).
\end{equation}

The Banach space $\mathcal{L}(E)$ represents the collection of all bounded linear operators on $E$, equipped with the operator norm topology. Furthermore, let $L^2(J,U)$ denote the Banach space consisting of all $U$-valued Bochner square-integrable functions defined on $J$, endowed with the norm:

\begin{equation}\label{eq:pre_2}
    \|u\|_2 = \left( \int_0^a \|u(t)\|_U^2 dt \right)^{\frac{1}{2}}, \quad u \in L^2(J,U).
\end{equation}

\begin{definition}\label{def:preli_1}
    (See \cite{kilbas2006theory}) The fractional integral of order $\alpha$ with a lower limit at $0$ for a function $f$ is expressed as
    \[
    I^\alpha f(t) = \frac{1}{\Gamma(\alpha)} \int_0^t \frac{f(s)}{(t-s)^{1-\alpha}} \, ds, \quad t > 0, \quad \alpha > 0,
    \]
    given that the integral on the right-hand side is well-defined for all $t \geq 0$. Here, $\Gamma(\alpha)$ represents the Gamma function.
\end{definition}

\begin{definition}\label{def:preli_2}
    (See \cite{kilbas2006theory}) The Caputo fractional derivative of order $\alpha > 0$ with a lower limit at $0$ for a function $f$ is formulated as
    \[
    {}^C D_t^\alpha f(t) = \frac{1}{\Gamma(1-\alpha)} \int_0^t (t-s)^{-\alpha} f'(s) \, ds, \quad t > 0,
    \]
    where $f(t)$ is absolutely continuous.
\end{definition}

\begin{definition}\label{Def:2}
    The Riemann–Liouville fractional derivative of order $\alpha$ with a lower limit at $0$ for a function $f : [0, \infty) \to \mathbb{R}$ is expressed as
\[
{}^{L}D^\alpha f(t) = \frac{1}{\Gamma(n-\alpha)} \frac{d^n}{dt^n} \int_0^t \frac{f(s)}{(t-s)^{\alpha-n+1}} \, ds, \quad t > 0, \, n-1 < \alpha < n.
\]
\end{definition}

\begin{definition}\label{Def:3}
    The Caputo fractional derivative of order $\alpha$ for a function $f : [0, \infty) \to \mathbb{R}$ is given by
\[
{}^{C}D^\alpha f(t) = {}^{L}D^\alpha \left(f(t) - \sum_{k=0}^{n-1} \frac{t^k}{k!} f^{(k)}(0)\right), \quad t > 0, \, n-1 < \alpha < n.
\]
\end{definition}

\begin{remark}
    \begin{enumerate}
        \item If $f \in C^n[0, \infty)$, then
        \[
        {}^{C}D^\alpha f(t) = \frac{1}{\Gamma(n-\alpha)} \int_0^t \frac{f^{(n)}(s)}{(t-s)^{\alpha-n+1}} \, ds = I^{n-\alpha} f^{(n)}(t), \quad t > 0, \quad n-1 < \alpha < n.
        \]
        \item The Caputo derivative of a constant function is always zero.
        \item If $f$ is a function taking values in a Hilbert space $E$, then the integrals in Definitions (\ref{Def:2}) and (\ref{Def:3}) are interpreted in the Bochner sense.
    \end{enumerate}
\end{remark}

If $f$ is a function mapping into $E$, then the integrals in Definitions (\ref{def:preli_1}) and (\ref{def:preli_2}) are considered in the Bochner sense. Specifically, a function $f: [0, +\infty) \to E$ is Bochner integrable if $\|f\|$ is Lebesgue integrable.

\begin{definition}\label{def:Type_M}
    Let $E$ be a real Hilbert space. Define $\mathcal{M}$ as the collection of all operators $N: E \to E$ that satisfy the following condition: for any $N \in \mathcal{M}$,
    \[
    \langle N x_1 - N x_2, x_1 - x_2 \rangle \geq \aleph_1 \|x_1 - x_2\|^2,
    \]
    for all $x_1, x_2 \in E$, where $\aleph_1$ is a given positive constant.
    
    For $N \in \mathcal{M}$, define the parameter
    \[
    \mu(N) = \inf_{\substack{x_1, x_2 \in E \\ x_1 \neq x_2}} \frac{\langle N x_1 - N x_2, x_1 - x_2 \rangle}{\|x_1 - x_2\|^2}.
    \]
\end{definition}

\begin{definition}\label{def:preli_3}
    (See \cite[Definition 2.3]{bazhlekova2001fractional}) A function \( T_\alpha : \mathbb{R}^+ \to \mathcal{L}(E) \) is referred to as an \(\alpha\)-order solution operator generated by \( A \) if it satisfies the following conditions:
    \begin{itemize}
        \item[(i)] The function \( T_\alpha(t) \) is strongly continuous for \( t \geq 0 \), with \( T_\alpha(0) = I \);
        \item[(ii)] For all \( t \geq 0 \), we have \( T_\alpha(t)D(A) \subseteq D(A) \), and the relation \( AT_\alpha(t)x = T_\alpha(t)Ax \) holds for every \( x \in E \);
        \item[(iii)] Given any \( x \in D(A) \) and \( t \geq 0 \), the function \( T_\alpha(t)x \) satisfies the following integral equation:
        \[
        u(t) = x + \frac{1}{\Gamma(\alpha)} \int_0^t (t-s)^{\alpha-1} A u(s) \, ds.
        \]
    \end{itemize}
\end{definition}

\begin{definition}\label{def:preli_4}
    (See \cite[Definition 2.13]{bazhlekova2001fractional}) Let \( 0 < \theta_0 \leq \frac{\pi}{2} \) and \( \omega_0 \in \mathbb{R} \). An \(\alpha\)-order solution operator \( \{T_\alpha(t)\}_{t \geq 0} \) is said to be analytic if it possesses an analytic extension to the sector \( \Sigma_{\theta_0} := \{z \in \mathbb{C} \setminus \{0\} : |\arg z| < \theta_0\} \) and this extension remains strongly continuous on \( \Sigma_\theta \) for each \( \theta \in (0, \theta_0) \).

An analytic \(\alpha\)-order solution operator \(T_\alpha(z)\), defined for \( z \in \Sigma_\theta \) and generated by \( A \), is classified as being of analyticity type \( (\omega_0, \theta_0) \) if, for every \( \theta \in (0, \theta_0) \) and \( \omega > \omega_0 \), there exists a constant \( M = M(\omega, \theta) > 0 \) such that
\[
\|T_\alpha(z)\| \leq M e^{\omega \Re z}, \quad z \in \Sigma_\theta.
\]  
If \( A \) generates an analytic \( \alpha \)-order solution operator \( T_\alpha \) of analyticity type \( (\omega_0, \theta_0) \), we denote this as \( A \in \mathcal{A}^\alpha(\omega_0, \theta_0) \).
\end{definition}

\begin{definition}\label{def:preli_5}
    Let $A$ be a closed linear operator with domain $D(A)$ defined on a Banach space $E$, and let $\alpha > 0$. The resolvent set of $A$ is denoted by $
ho(A)$. The operator $A$ is called the generator of an $\alpha$-order resolvent family if there exist constants $\omega \geq 0$ and a strongly continuous mapping $S_{\alpha} : \mathbb{R}^{+} \to \mathcal{L}(E)$ such that  
    \[
    \{ \nu \in \mathbb{C} : \text{Re} \, \nu > \omega \} \subset \rho(A),
    \]
    and 
    \[
    (\nu^{\alpha}I - A)^{-1}u = \int_{0}^{\infty} e^{-\nu t} S_{\alpha}(t)u \, dt, \quad \text{for } \text{Re} \, \nu > \omega, \quad u \in E.
    \]
    In this context, the family $\{S_{\alpha}(t)\}_{t \geq 0}$ is referred to as the $\alpha$-order resolvent family generated by $A$.
\end{definition}

\begin{definition}\label{def:preli_6}
    An $\alpha$-order solution family $\{T_{\alpha}(t)\}_{t \geq 0}$ is termed compact if, for every $t > 0$, the operator $T_{\alpha}(t)$ is compact. Likewise, an $\alpha$-order resolvent family $\{S_{\alpha}(t)\}_{t \geq 0}$ is called compact if, for every $t > 0$, the operator $S_{\alpha}(t)$ is compact.
\end{definition}

By employing a method analogous to the one utilized in (\cite[Lemma 10]{fan2014characterization}), we establish the continuity of the $
\alpha$-order solution operator $\{T_{\alpha}(t)\}_{t \geq 0}$ and the $
\alpha$-order resolvent operator $\{S_{\alpha}(t)\}_{t \geq 0}$ in the uniform operator topology.

\begin{lemma}\label{lemma:1}
    Suppose that $A \in \mathcal{A}^{\alpha}(\omega_{0}, \theta_{0})$, and that both the $
\alpha$-order solution operator $\{T_{\alpha}(t)\}_{t \geq 0}$ and the $
\alpha$-order resolvent operator $\{S_{\alpha}(t)\}_{t \geq 0}$ are compact. Then, for every $t > 0$, the following properties hold:
    \begin{enumerate}
        \item $\lim_{h \rightarrow 0} \|T_{\alpha}(t + h) - T_{\alpha}(t)\| = 0,$ and $\lim_{h \rightarrow 0} \|S_{\alpha}(t + h) - S_{\alpha}(t)\| = 0$;
        \item $\lim_{h \rightarrow 0^{+}} \|T_{\alpha}(t + h) - T_{\alpha}(h)T_{\alpha}(t)\| = 0,$ and $\lim_{h \rightarrow 0^{+}} \|S_{\alpha}(t + h) - S_{\alpha}(h)S_{\alpha}(t)\| = 0$;
        \item $\lim_{h \rightarrow 0^{+}} \|T_{\alpha}(t) - T_{\alpha}(h)T_{\alpha}(t - h)\| = 0,$ and $\lim_{h \rightarrow 0^{+}} \|S_{\alpha}(t) - S_{\alpha}(h)S_{\alpha}(t - h)\| = 0$.
    \end{enumerate}
\end{lemma}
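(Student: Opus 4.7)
The plan is to mirror the classical argument for compact analytic semigroups (Pazy-style), adapted to the fractional resolvent setting, and treat $T_\alpha$ and $S_\alpha$ in parallel since both inherit analyticity from $A \in \mathcal{A}^\alpha(\omega_0,\theta_0)$ and compactness by hypothesis.

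For claim (1), I would exploit analyticity to obtain a Bromwich/Hankel contour representation
\[
T_\alpha(t) = \frac{1}{2\pi i}\int_{\Gamma} e^{\lambda t}\,\lambda^{\alpha-1}(\lambda^\alpha I - A)^{-1}\, d\lambda,
\]
with an analogous identity for $S_\alpha(t)$, where $\Gamma$ is a keyhole contour lying in the resolvent set sector. Differentiating under the integral sign and using the analyticity-type bound $\|T_\alpha(z)\|\leq M e^{\omega \Re z}$ on $\Sigma_\theta$, the contour integral for $T_\alpha'(t)$ converges absolutely in operator norm for every $t>0$, giving an estimate of the form $\|T_\alpha'(t)\|\leq C(t)$ uniformly on compact subsets of $(0,\infty)$. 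The operator-valued fundamental theorem of calculus then yields
\[
\|T_\alpha(t+h)-T_\alpha(t)\| \leq \Bigl\|\int_{t}^{t+h} T_\alpha'(s)\, ds\Bigr\| \leq \int_{t}^{t+h}\|T_\alpha'(s)\|\, ds \longrightarrow 0 \text{ as } h\to 0,
\]
and identically for $S_\alpha$.

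For claims (2) and (3), I would combine (1) with the compactness assumption via the standard decomposition
\[
T_\alpha(t+h)-T_\alpha(h)T_\alpha(t) = \bigl[T_\alpha(t+h)-T_\alpha(t)\bigr] + \bigl[I-T_\alpha(h)\bigr]T_\alpha(t).
\]
The first bracket tends to zero in operator norm by (1). For the second, strong continuity at $0$ gives $T_\alpha(h)x\to x$ for every $x\in E$, so $I-T_\alpha(h)\to 0$ strongly; since $T_\alpha(t)$ is compact, the general fact that a strongly null family composed on the right with a compact operator converges to zero in operator norm (apply it on the unit ball, whose image under $T_\alpha(t)$ is precompact) yields $\|[I-T_\alpha(h)]T_\alpha(t)\|\to 0$. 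Claim (3) follows the same template after writing
\[
T_\alpha(t)-T_\alpha(h)T_\alpha(t-h) = \bigl[T_\alpha(t)-T_\alpha(t-h)\bigr] + \bigl[I-T_\alpha(h)\bigr]T_\alpha(t-h),
\]
noting that $T_\alpha(t-h)$ remains in a compact-operator family and that strong convergence of $T_\alpha(h)\to I$ composed with that compact family is again uniform on bounded sets. The $S_\alpha$ statements are handled verbatim.

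The main obstacle is the first paragraph, specifically producing the operator-norm differentiability on $(0,\infty)$ from the mere definitions of $\mathcal{A}^\alpha(\omega_0,\theta_0)$. One must choose the Hankel contour so as to control the singularity of $\lambda^{\alpha-1}$ at the origin while still closing the integral against the exponential factor, and then verify that the differentiated integrand $\lambda^\alpha e^{\lambda t}(\lambda^\alpha I - A)^{-1}$ is absolutely integrable in $\mathcal{L}(E)$ uniformly for $t$ in compact subsets of $(0,\infty)$; this is where the compactness assumption becomes compatible with the analyticity estimate in the spirit of \cite[Lemma 10]{fan2014characterization}. Once that step is secured, everything else reduces to the compact-operator/strong-convergence principle and is essentially bookkeeping.
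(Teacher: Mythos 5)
First, a caveat: the paper does not actually write out a proof of this lemma --- it is asserted ``by a method analogous to'' \cite[Lemma 10]{fan2014characterization} --- so your reconstruction is being measured against that cited argument rather than against text in the paper. Your architecture is the standard one and matches it in spirit: norm continuity on $(0,\infty)$ from analyticity for part (1), and the algebraic decompositions
$T_{\alpha}(t+h)-T_{\alpha}(h)T_{\alpha}(t)=\bigl[T_{\alpha}(t+h)-T_{\alpha}(t)\bigr]+\bigl[I-T_{\alpha}(h)\bigr]T_{\alpha}(t)$ (and its analogue for (3)) combined with compactness for parts (2)--(3); these identities are correct. Two remarks on part (1): the Hankel-contour computation you flag as ``the main obstacle'' is unnecessary, since an operator-valued function that is analytic on an open sector containing $(0,\infty)$ with $\|T_{\alpha}(z)\|\leq Me^{\omega\Re z}$ is automatically norm-analytic there (weak, strong and uniform analyticity coincide), so local norm-Lipschitz continuity of $t\mapsto T_{\alpha}(t)$ and $t\mapsto S_{\alpha}(t)$ on $(0,\infty)$ follows directly from the Cauchy integral formula on a small disc inside $\Sigma_{\theta}$; and compactness plays no role in (1), contrary to your closing sentence.

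The genuine gaps are in parts (2)--(3). (i) You claim the $S_{\alpha}$ statements are ``handled verbatim'', but your decomposition requires $S_{\alpha}(h)\to I$ strongly as $h\to 0^{+}$. Definition \ref{def:preli_5} only postulates strong continuity on $\mathbb{R}^{+}$ and the identity $(\nu^{\alpha}I-A)^{-1}=\widehat{S_{\alpha}}(\nu)$; it does not assert $S_{\alpha}(0)=I$, and for $0<\alpha<1$ an operator family with that Laplace symbol generically carries a $t^{\alpha-1}$-type singular factor near $t=0$, so $S_{\alpha}(h)\to I$ is precisely the point that must be argued (or the normalization of $S_{\alpha}$ pinned down) before the $T_{\alpha}$ proof transfers; as written, this half of (2)--(3) does not go through. (ii) In part (3) the compact factor $T_{\alpha}(t-h)$ varies with $h$, and ``strong convergence composed with a compact family is uniform on bounded sets'' is false for an arbitrary family of compact operators. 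Either note that $\{T_{\alpha}(t-h):0<h<t/2\}$ is collectively compact, or, more cleanly, write $[I-T_{\alpha}(h)]T_{\alpha}(t-h)=[I-T_{\alpha}(h)]T_{\alpha}(t)+[I-T_{\alpha}(h)][T_{\alpha}(t-h)-T_{\alpha}(t)]$ and dispose of the second term using part (1) together with the uniform bound $\|I-T_{\alpha}(h)\|\leq 1+M_{T}$ from (\ref{eq:MT_3}). That same uniform bound should be invoked explicitly in part (2) as well, since upgrading strong convergence on the precompact set $\overline{T_{\alpha}(t)(B_{1})}$ to uniform convergence requires equiboundedness of the family $\{I-T_{\alpha}(h)\}$.
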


\begin{lemma}
    \label{lemma:2} (See \cite[2.26]{bazhlekova2001fractional}, \cite[Theorem 2.3]{shi2016study})
    Suppose that $A \in \mathcal{A}^{\alpha}(\omega_0, \theta_0)$. Then, for any $t > 0$ and $\omega > \omega_0$, the following inequalities hold:
    
    \begin{equation}\label{eq:MT_1}
        \|T_{\alpha}(t)\|_{\mathcal{L}(E)} \leq Me^{\omega t}, \quad \text{and} \quad \|S_{\alpha}(t)\|_{\mathcal{L}(E)} \leq Ce^{\omega t}(1 + t^{1-\alpha}).
    \end{equation}
    
    Moreover, defining
    \begin{equation}\label{eq:MT_2}
        M_T := \sup_{t \in \mathbb{R}^+} \|T_{\alpha}(t)\|_{\mathcal{L}(E)}, \quad M_S := \sup_{t \in \mathbb{R}^+} Ce^{\omega t}(1 + t^{1-\alpha}),
    \end{equation}
    we derive
    \begin{equation}\label{eq:MT_3}
        \|T_{\alpha}(t)\|_{\mathcal{L}(E)} \leq M_T, \quad \|S_{\alpha}(t)\|_{\mathcal{L}(E)} \leq e^{\omega t}M_S.
    \end{equation}
    
    For additional definitions and properties of the $\alpha$-order solution and resolvent operators, refer to \cite{bazhlekova2001fractional}, \cite{li2012cauchy}, and \cite{lian2017approximate}.
\end{lemma}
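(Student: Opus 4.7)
The plan is to derive the two pointwise estimates in two independent steps and then read off the remaining inequalities as immediate consequences.

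First, the bound on $T_\alpha$ is essentially built into the hypothesis $A \in \mathcal{A}^\alpha(\omega_0, \theta_0)$. By Definition~\ref{def:preli_4}, for every admissible $\theta \in (0, \theta_0)$ and every $\omega > \omega_0$ there exists $M = M(\omega, \theta) > 0$ such that $\|T_\alpha(z)\| \le M e^{\omega \Re z}$ for all $z \in \Sigma_\theta$. Since $\mathbb{R}^+ \subset \Sigma_\theta$, specialising to $z = t > 0$ (so that $\Re z = t$) yields $\|T_\alpha(t)\|_{\mathcal{L}(E)} \le M e^{\omega t}$ at once, with no further work.

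Second, for the bound on $S_\alpha$ I would combine the Laplace-transform characterisation in Definition~\ref{def:preli_5} with a contour-integral representation. From $\widehat{S_\alpha}(\nu) = (\nu^\alpha I - A)^{-1}$, an inverse Laplace transform gives $S_\alpha(t) = \tfrac{1}{2\pi i}\int_\Gamma e^{\nu t}(\nu^\alpha I - A)^{-1}\, d\nu$ along a Hankel-type contour $\Gamma$ lying in the sector of analyticity. The sectorial resolvent estimate (available because $A \in \mathcal{A}^\alpha(\omega_0,\theta_0)$ forces $\|(\nu^\alpha I - A)^{-1}\|$ to be controlled by $|\nu|^{-\alpha}$ for large $|\nu|$) combined with a standard splitting of $\Gamma$ into a circular arc of radius of order $1/t$ and two outgoing rays produces two contributions: the arc yields the $t^{1-\alpha}$ term, and the rays yield a bounded term, each carrying the overall factor $e^{\omega t}$. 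Summing them gives $\|S_\alpha(t)\|_{\mathcal{L}(E)} \le C e^{\omega t}(1 + t^{1-\alpha})$ as claimed.

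The last pair of inequalities in \eqref{eq:MT_3} is then tautological: the definition of $M_T$ as a supremum gives $\|T_\alpha(t)\| \le M_T$ directly, and the second bound combined with the definition of $M_S$ gives $\|S_\alpha(t)\| \le e^{\omega t} M_S$.

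The main obstacle is the contour estimate for $S_\alpha$: one must balance the resolvent decay $|\nu|^{-\alpha}$ against the exponential factor $e^{\nu t}$, optimise the contour radius as a function of $t$, and then keep track of the $e^{\omega t}$ factor uniformly across all pieces of the contour. Because this calculation is already carried out in detail in the cited references (Bazhlekova's Proposition~2.26 and Shi--Peng's Theorem~2.3), in practice one invokes those results directly rather than reproducing the Hankel-contour analysis in full.
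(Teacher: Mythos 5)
Your proposal is correct and takes essentially the same route as the paper, which states Lemma~\ref{lemma:2} without proof and simply cites Bazhlekova and Shi--Peng for the operator estimates; your additional observation that the $T_\alpha$ bound is already contained in Definition~\ref{def:preli_4} (restrict $z$ to $\mathbb{R}^+$) and that \eqref{eq:MT_3} is tautological given \eqref{eq:MT_2} is accurate. One caveat on your Hankel-contour sketch: balancing the arc of radius $r\sim 1/t$ against the resolvent decay $|\nu|^{-\alpha}$ gives a contribution of order $r\cdot r^{-\alpha}\sim t^{\alpha-1}$, not $t^{1-\alpha}$, and indeed the cited result in Bazhlekova carries the singular factor $t^{\alpha-1}$; the exponent $t^{1-\alpha}$ appears to be a transcription slip in the paper's statement rather than an error introduced by you, but as written your arc estimate would not produce the term you claim.
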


\textbf{Assumption(H1):} $\quad \sum_{k=1}^m |c_k| < \frac{1}{M_T}.$

We now restate assumption (H1) in the following manner:
\begin{equation}\label{eq:modified_1}
  \left\| \sum_{k=1}^m c_k T_\alpha(t_k) \right\| \leq M_T \sum_{k=1}^m |c_k| < 1.
\end{equation}

Utilizing equation~(\ref{eq:modified_1}) along with the spectral theorem for operators, we deduce that
\begin{equation}\label{eq:modified_2}
    \mathcal{O} := \left( I - \sum_{k=1}^m c_k T_\alpha(t_k) \right)^{-1}
\end{equation}
exists, is bounded, and satisfies \( D(\mathcal{O}) = E \). Moreover, applying the Neumann series expansion, \( \mathcal{O} \) can be represented as
\begin{equation}\label{eq:modified_3}
    \mathcal{O} = \sum_{n=0}^\infty \left( \sum_{k=1}^m c_k T_\alpha(t_k) \right)^n.
\end{equation}

Therefore,
\begin{equation}\label{eq:chen_4}
    \|\mathcal{O}\| \leq \sum_{n=0}^\infty \left\| \sum_{k=1}^m c_k T_\alpha(t_k) \right\|^n \leq \frac{1}{1 - M_T \sum_{k=1}^m |c_k|}.
\end{equation}

Considering the preceding discussion, as well as results from \cite[Proposition 1.2]{pruss2012evolutionary} and \cite{shi2016study}, it follows that the mild solution to the fractional evolution equation \eqref{eq:P} with initial condition \( u(0) \) is expressed as
\begin{equation}\label{eq:chen_5}
    u(t) = T_\alpha(t) u(0) + \int_0^t S_\alpha(t - s) \big[ B v(s) + f(s, u(s)) \big] ds, \quad t \in J.
\end{equation}
From equation \eqref{eq:chen_5}, we derive the expression
\begin{equation}\label{eq:chen_5_1}
    u(t_k) = T_\alpha(t_k) u(0) + \int_0^{t_k} S_\alpha(t_k - s) \big[ B v(s) + f(s, u(s)) \big] ds, \quad k=1,2,\dots,m.
\end{equation}
Utilizing equations \eqref{eq:P} and \eqref{eq:chen_5_1}, we obtain
\begin{equation}\label{eq:chen_6}
    u(0) = \sum_{k=1}^m c_k T_\alpha(t_k) u(0) + \sum_{k=1}^m c_k \int_0^{t_k} S_\alpha(t_k - s) [Bv(s) + f(s, u(s))] \, ds. 
\end{equation}

Utilizing assumption (H1) along with the definition of the operator $\mathcal{O}$, we derive the following expression:
\begin{equation}\label{eq:chen_7}
    u(0) = \sum_{k=1}^m c_k \mathcal{O} \int_0^{t_k} S_\alpha(t_k - s) \big[Bv(s) + f(s, u(s))\big] \, ds. 
\end{equation}

As a result, combining equations (\ref{eq:chen_5}) and (\ref{eq:chen_7}), we obtain

\begin{align}\label{eq:chen_8}
    u(t) &= \sum_{k=1}^m c_k T_\alpha(t) \mathcal{O} \int_0^{t_k} S_\alpha(t_k - s) \big[Bv(s) + f(s, u(s))\big] \, ds 
\nonumber \\
    &\quad + \int_0^t S_\alpha(t - s) \big[Bv(s) + f(s, u(s))\big] \, ds, \quad t \in J.
\end{align}  

For simplicity, we define the Green's function $G(t, s)$ as:
\begin{equation}\label{eq:chen_9}
    G(t, s) = \sum_{k=1}^m \chi_{t_k}(s) T_\alpha(t) \mathcal{O} S_\alpha(t_k - s) + \chi_t(s) S_\alpha(t - s), \quad t, s \in [0, a],
\end{equation}  
where,

\begin{equation}\label{eq:chen_10}
\chi_{t_k}(s) =
\begin{cases}
c_k, & s \in [0, t_k), \\
0, & s \in [t_k, a],
\end{cases}
\quad
\chi_t(s) =
\begin{cases}
1, & s \in [0, t), \\
0, & s \in [t, a].
\end{cases} 
\end{equation}  

Thus, for given $v \in L^2(J,U)$ and the  based on equations \eqref{eq:chen_8}, \eqref{eq:chen_9}, and \eqref{eq:chen_10}, it follows that the mild solution of the fractional evolution equation (FEE) \eqref{eq:P} is given by (See,\cite{chen2020existence} and \cite{jha2025approximate}  ) 
\begin{equation}\label{eq:chen_11}
    u(t) = \int_0^a G(t, s) \big[Bv(s) + f(s, u(s))\big] \, ds, \quad t \in J.
\end{equation}

Let the cost functional associated with the system (\ref{eq:P}) be given by
\begin{equation*}
    J(v) = \phi(v, u) = \int_0^a h(\tau, v(\tau), u(\tau))\, d\tau,
\end{equation*}
where $h$ is a mapping from $[0,a] \times U \times E \rightarrow \bar{\mathbb{R}}_+$, satisfying the Carathéodory conditions with respect to $t$, $v$, and $x$.

let $H= L^2(J,E)$ and $W= AC(J,U).$ Define the operators $K: H^* \to H,  N: H \to H^*$ and $H: W \to H$ as

\begin{equation}\label{eq:R}\tag{R}
	\begin{cases}
		(Ku)(t) = \int_{0}^{t} G(t,s)u(s) \, ds,\\
         (N u)(t) = f(t, u(t))\\
         (Hv)(t)=\int_{0}^{t} G(t,s)B v(s) \, ds,
	\end{cases}
\end{equation}

By using the above definitions it is easy to see that the equation (\ref{eq:chen_11}) can be reduced
in to the abstract operator equation of Hammerstein type

\begin{equation}\label{eq:Dev_1}
    u=KNu+ Hv.
\end{equation}

\section{Analysis of the Existence of an Optimal Pair in the Abstract Framework}

In this section, we find sufficient conditions for the operators to ensure the existence of an optimal pair for the abstract system (\ref{eq:Dev_1}). Throughout this section, we assume that $X$ and $W$ are reflexive Banach spaces, and let the operator $T : U \rightarrow X$ be such that $T u = x$, where $x \in X$ is a solution operator that has a unique solution $x \in X$ for each control $u \in U$ satisfying (\ref{eq:Dev_1}), that is $x = Tu$. To address this problem, we give the set of sufficient conditions for the operators $L, N$ $K$ that will be used in the main results of this section.

\textbf{Assumptions [$\mathfrak{A}$]:}
\begin{itemize}
    \item[($\mathfrak{A_1}$)]: The operator \( K : X^* \to X \) is linear and compact, and there exists a constant \( d > 0 \) such that \( \langle Kx, x \rangle \geq d \|Kx\|^2 \) for all \( x \in X^* \).
        \item[($\mathfrak{A_2}$)]:  The operator  \( N : X \to X^* \) is continuous, bounded, and negatively monotone.
        \item[($\mathfrak{A_3}$)]:  The operator  \( H : U \to X \) is completely continuous.
\end{itemize}

\textbf{Assumptions [$\mathfrak{B}$]:}
\begin{itemize}
    \item[($\mathfrak{B_1}$)]:  \( K \) is linear and belongs to \( M \).
        \item[($\mathfrak{B_2}$)]: \( N \in \text{Lip} \) and \( \mu(-N) > 0 \) with \( (\mu(K) + \mu(-N))\|N\|^{*{-2}} > 0 \).
        \item[($\mathfrak{B_3}$)]:  \( H \) is completely continuous.
\end{itemize}

\textbf{Assumptions [$\mathfrak{C}$]:}
\begin{itemize}
    \item[($\mathfrak{C_1}$)]:  \( K \) and \( N \) satisfy either ($\mathfrak{A_1}$) and($\mathfrak{A_2}$) or ($\mathfrak{B_1}$) and ($\mathfrak{B_2}$).
        \item[($\mathfrak{C_2}$)]: \( H \) and \( N \) are weakly continuous.
\end{itemize}

\textbf{Assumptions [$\mathfrak{D}$]:}
\begin{itemize}
    \item[($\mathfrak{D_1}$)]:  \( K \) is a bounded linear operator, and there exists a constant \( d > 0 \) such that \( \langle Kx, x \rangle \geq d \|Kx\|^2 \) for all \( x \in X^* \).
        \item[($\mathfrak{D_2}$)]: \( N \) is a continuous bounded operator of type (M).
         \item[($\mathfrak{D_2}$)]:  \( H \) is completely continuous.
\end{itemize}

\begin{lemma}\label{lemma:dev_1}
    Suppose that the operators \( K, N \) and \( H \) satisfy either Assumptions [$\mathfrak{A}$] or Assumptions [$\mathfrak{B}$], then the system operator \( T \) is well defined and is completely continuous.
\end{lemma}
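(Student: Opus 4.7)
The plan is to split the claim into two parts: (i) well-definedness, i.e.\ for each $u\in U$ the equation $x = KNx + Hu$ has a unique solution in $X$, and (ii) complete continuity of the solution map $T:u\mapsto x$. Since two separate sets of hypotheses are offered, I will organize the argument so that portions common to both cases (an \emph{a priori} bound on $Tu$ and the extraction of a convergent subsequence of $\{Tu_n\}$ from a bounded $\{u_n\}$) are handled once, while the case-specific existence and uniqueness arguments are done separately.

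For uniqueness under \textbf{[$\mathfrak{A}$]}, suppose $x_1, x_2$ are two solutions for the same $u$. Subtracting gives $x_1-x_2 = K(Nx_1-Nx_2)$. Pairing with $Nx_1-Nx_2$ and using $\langle Ky,y\rangle\ge d\|Ky\|^2$ with $y=Nx_1-Nx_2$, one obtains
\[
\langle Nx_1-Nx_2,\,x_1-x_2\rangle \;\ge\; d\,\|K(Nx_1-Nx_2)\|^2 \;=\; d\,\|x_1-x_2\|^2.
\]
Negative monotonicity of $N$ forces the left-hand side to be non-positive, so $x_1=x_2$. For existence, the map $\Phi(x)=KNx+Hu$ is compact (since $K$ is compact and $N$ is continuous and bounded), so Schaefer's (Leray--Schauder) fixed-point theorem applies once an \emph{a priori} bound is established for the homotopy $x=\lambda\Phi(x)$, $\lambda\in[0,1]$; that bound comes from the same pairing trick together with the growth of $H$.

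Under \textbf{[$\mathfrak{B}$]}, the strict monotonicity of $K$ (class $M$ with constant $\mu(K)$) and of $-N$ (with $\mu(-N)>0$), combined with $N\in\text{Lip}$, let me show that the fixed-point map $\Phi(x)=KNx+Hu$ is a strict contraction in the norm of $X$: the product of the Lipschitz constant of $N$ and the ``angle'' estimate for $K$ coming from membership in $M$ is controlled exactly by the hypothesis $(\mu(K)+\mu(-N))\|N\|^{*-2}>0$, which is the quantitative condition needed for the contraction bound to be strictly less than one. The Banach fixed-point theorem then yields the unique solution $x=Tu$.

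For complete continuity, let $\{u_n\}\subset U$ be bounded and put $x_n:=Tu_n$, so $x_n=KNx_n+Hu_n$. A first step is an \emph{a priori} bound showing $\{x_n\}$ is bounded: under [$\mathfrak{A}$] this uses the coercivity-type inequality on $K$ applied to $x_n-Hu_n=KNx_n$ together with negative monotonicity of $N$, while under [$\mathfrak{B}$] it follows directly from the contraction estimate. Once $\{x_n\}$ is bounded, boundedness of $N$ yields $\{Nx_n\}$ bounded, compactness of $K$ yields $\{KNx_n\}$ relatively compact, and complete continuity of $H$ yields $\{Hu_n\}$ relatively compact; hence $x_n=KNx_n+Hu_n$ has a norm-convergent subsequence. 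Continuity of $T$ then follows by passing to the limit in the defining equation along an arbitrary subsequence and invoking uniqueness to identify the limit as $Tu$. The main obstacle I anticipate is obtaining the correct \emph{a priori} bound under [$\mathfrak{A}$] so Schaefer's theorem is available and the same bound transfers to the complete-continuity argument; the interplay between the coercivity of $K$ and the negative monotonicity of $N$ must be calibrated carefully, since neither property is individually coercive in a way that controls $\|x\|$.
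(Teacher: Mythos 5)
Your treatment of case [$\mathfrak{A}$] is essentially sound and, unlike the paper's proof (which simply invokes Theorem~1 of Hess for the well-definedness of $(I-KN)^{-1}$ and then composes with the completely continuous $H$), aims to be self-contained. The uniqueness pairing is correct, and the a priori bound for the Schaefer homotopy does close, but only if you calibrate it by comparing $Nx$ with $N(\lambda Hu)$ rather than with $N0$: negative monotonicity gives $\langle Nx, x-\lambda Hu\rangle \le \langle N(\lambda Hu),\, x-\lambda Hu\rangle$, and combined with $\langle x-\lambda Hu, Nx\rangle \ge (d/\lambda)\|x-\lambda Hu\|^2$ this yields $\|x-\lambda Hu\| \le (\lambda/d)\|N(\lambda Hu)\|$, which is uniformly bounded because $N$ is bounded. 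As written ("the same pairing trick together with the growth of $H$") the bound is asserted rather than obtained, and the naive comparison with $N0$ leaves an uncontrolled term $\|Hu\|\,\|Nx\|$.

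Case [$\mathfrak{B}$] contains a genuine gap. The hypotheses there are monotonicity conditions ($K\in\mathcal{M}$, $\mu(-N)>0$, $N$ Lipschitz), not smallness conditions, and they do not make $\Phi(x)=KNx+Hu$ a strict contraction. For instance, take $X$ a Hilbert space, $K=cI$ with $c$ large and $N=-I$: then $\mu(K)=c>0$, $\mu(-N)=1>0$, $N$ is $1$-Lipschitz and the quantity $(\mu(K)+\mu(-N))\|N\|^{*-2}$ is positive, yet $\Phi$ has Lipschitz constant $c>1$. The equation is still uniquely solvable (here $I-KN=(1+c)I$), but the mechanism is strong monotonicity of a suitable transform of $I-KN$, not the Banach fixed-point theorem applied to $\Phi$; this is precisely what Theorem~2.1 of Dolezal, cited in the paper, supplies. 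Relatedly, your complete-continuity step ``compactness of $K$ yields $\{KNx_n\}$ relatively compact'' is only available under [$\mathfrak{A}$]: under [$\mathfrak{B}$] the operator $K$ is not assumed compact, so that extraction argument fails. The repair is the paper's route: once $[I-KN]^{-1}$ is known to be continuous and bounded (Lipschitz, from the monotone-operator argument), write $T=[I-KN]^{-1}H$ and use that a continuous map applied after a completely continuous one is completely continuous.
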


\begin{proof}
    Let Assumptions [$\mathfrak{A}$] hold. Then by Theorem 1 of Hess \cite{hess1971nonlinear} it follows that it \((I - KN)^{-1}\) is well defined, and so is \( T = [I - KN]^{-1}H \). The boundedness and continuity of \([I - KN]^{-1}\) follow from a similar argument given in the proof of Lemma 5.3 of Jha \cite{jha2025approximate}. This, together with complete continuity of \( H \) imply that \( T \) is completely continuous.\\
    If Assumptions [$\mathfrak{B}$] hold, we get the continuity and boundedness of \([I - KN]^{-1}\) by Theorem 2.1 of Dolezal \cite{dolevzal1979monotone} and hence the complete continuity of \([I - KN]^{-1}H \).
    
\end{proof}

Similarly, we have the following result regarding the weak continuity of \( T \).
\begin{lemma}\label{lemma:dev_2}
    Under any one of the set of Assumptions [$\mathfrak{C}$] or Assumptions [$\mathfrak{D}$], the system operator \( T \) is weakly continuous.
\end{lemma}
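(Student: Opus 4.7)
The plan is to mimic the argument structure that underlies Lemma~\ref{lemma:dev_1}, but upgrade strong/norm continuity tools to their weak analogues. Take a weakly convergent sequence $v_n \rightharpoonup v$ in $U$, set $u_n = T v_n$, so that $u_n$ satisfies the Hammerstein-type equation
\begin{equation*}
    u_n = K N u_n + H v_n.
\end{equation*}
Under either set of assumptions, $(I - KN)^{-1}$ is well defined and bounded (from the proof of Lemma~\ref{lemma:dev_1} in the case $(\mathfrak{C}_1)$, and from Hess~\cite{hess1971nonlinear}/Dole\v{z}al~\cite{dolevzal1979monotone} style arguments adapted to $(\mathfrak{D})$ since $K$ satisfies the same coercivity and $N$ is of type~$(M)$). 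Since $\{v_n\}$ is weakly, hence norm, bounded, so is $\{H v_n\}$ (either by weak continuity plus uniform boundedness, or by complete continuity), and therefore $\{u_n\}$ is bounded in the reflexive space $X$. Extract a weakly convergent subsequence $u_{n_k} \rightharpoonup u^*$. The goal is to identify $u^* = T v$ and then promote the subsequential limit to the full sequence by the standard Urysohn-type subsequence principle.

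\textbf{Case $[\mathfrak{C}]$.} Here weak continuity of $N$ gives $N u_{n_k} \rightharpoonup N u^*$, and since $K$ is a bounded linear operator (in both sub-cases $(\mathfrak{A}_1)$ and $(\mathfrak{B}_1)$), it is automatically weakly continuous, hence $K N u_{n_k} \rightharpoonup K N u^*$. Weak continuity of $H$ gives $H v_{n_k} \rightharpoonup H v$. Passing to the weak limit in $u_{n_k} = K N u_{n_k} + H v_{n_k}$ yields $u^* = K N u^* + H v$, so $u^* = T v$ by uniqueness of the solution.

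\textbf{Case $[\mathfrak{D}]$.} Complete continuity of $H$ upgrades $H v_n \to H v$ in norm. Boundedness of $N$ on bounded sets lets one extract a further subsequence (not relabeled) with $N u_{n_k} \rightharpoonup w^*$ in $X^*$, and then the identity $K N u_{n_k} = u_{n_k} - H v_{n_k}$ shows $K N u_{n_k} \rightharpoonup u^* - H v$, while simultaneously, by weak continuity of the bounded linear $K$, $K N u_{n_k} \rightharpoonup K w^*$; hence $K w^* = u^* - H v$. To conclude $w^* = N u^*$, invoke the type~$(M)$ property of $N$: one must verify the limsup inequality $\limsup_k \langle N u_{n_k}, u_{n_k} \rangle \leq \langle w^*, u^* \rangle$. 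Split
\begin{equation*}
    \langle N u_{n_k}, u_{n_k} \rangle = \langle N u_{n_k}, K N u_{n_k} \rangle + \langle N u_{n_k}, H v_{n_k} \rangle,
\end{equation*}
where the second term converges to $\langle w^*, H v \rangle$ since $N u_{n_k} \rightharpoonup w^*$ and $H v_{n_k} \to H v$ strongly. For the first term, the coercivity $\langle K x, x \rangle \geq d \|K x\|^2$ applied at $x = N u_{n_k}$ produces a one-sided estimate, and combined with weak lower semicontinuity of the norm and the convergence $K N u_{n_k} \rightharpoonup u^* - H v$, it controls the limsup by $\langle w^*, K w^* \rangle = \langle w^*, u^* - H v \rangle$. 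Adding the two pieces yields exactly $\langle w^*, u^* \rangle$, so the type~$(M)$ hypothesis applies and $N u^* = w^*$, whence $u^* = T v$.

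\textbf{Closing and the main obstacle.} Since every weakly convergent subsequence of $\{u_n\}$ admits a further subsequence with the same weak limit $T v$, the full sequence satisfies $T v_n \rightharpoonup T v$, establishing weak continuity. I expect the main technical obstacle to be the limsup verification in Case $[\mathfrak{D}]$: the coercivity condition on $K$ only yields a one-sided bound and must be carefully paired with weak lower semicontinuity of $\|K N u_{n_k}\|$ and with the decomposition above to land exactly on $\langle w^*, u^* \rangle$ rather than a strict over/underestimate. The rest of the argument is a clean weak-limit chase enabled by Lemma~\ref{lemma:dev_1} and the weak continuity of bounded linear maps.
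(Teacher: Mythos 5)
The paper offers no proof of Lemma~\ref{lemma:dev_2} at all --- it is introduced only with the word ``Similarly'' after Lemma~\ref{lemma:dev_1} --- so your proposal has to be judged on its own merits. Your architecture (extract a weakly convergent subsequence of \(u_n = Tv_n\), pass to the weak limit in the Hammerstein equation, identify the limit as \(Tv\), then upgrade to the full sequence by the subsequence principle) is the natural one, and your Case \([\mathfrak{C}]\) is correct and essentially routine: there every operator in sight is weakly (sequentially) continuous and the limit passage is immediate.

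The gap is in Case \([\mathfrak{D}]\), at exactly the step you flag as the main obstacle. You need \(\limsup_k \langle Nu_{n_k}, KNu_{n_k}\rangle \leq \langle w^*, Kw^*\rangle\) and claim that the coercivity \(\langle Kx,x\rangle \geq d\|Kx\|^2\) together with weak lower semicontinuity of the norm ``controls the limsup.'' It does the opposite. That coercivity makes \(x\mapsto\langle Kx,x\rangle\) a nonnegative quadratic form, \(\langle Kx,x\rangle=\langle K_sx,x\rangle=\|K_s^{1/2}x\|^2\) with \(K_s=(K+K^*)/2\) positive semidefinite, hence convex and weakly \emph{lower} semicontinuous: for \(x_k=Nu_{n_k}\rightharpoonup w^*\) one obtains \(\liminf_k\langle x_k,Kx_k\rangle\geq\langle w^*,Kw^*\rangle\), and the reverse inequality genuinely fails under weak convergence (take \(K=I\), which satisfies the coercivity with \(d=1\), and \(x_k\) an orthonormal sequence). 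So as written your decomposition produces an inequality pointing the wrong way and the type~(M) hypothesis cannot be invoked. The repair is a sign flip you have suppressed: rewrite the equation as \(u + K(-N)u = Hv\), so that \(\langle (-N)u_{n_k},u_{n_k}\rangle = -\langle (-N)u_{n_k},K(-N)u_{n_k}\rangle + \langle (-N)u_{n_k},Hv_{n_k}\rangle\); the quadratic term now enters with a minus sign, weak lower semicontinuity of \(\langle\cdot,K\cdot\rangle\) yields precisely the required \(\limsup\) bound, and the type~(M) property should be applied to \(-N\) --- consistent with \((\mathfrak{A_2})\), where it is \(N\) that is \emph{negatively} monotone, i.e.\ \(-N\) is the monotone operator of the classical Hammerstein framework. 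A secondary, lesser issue: the a priori bound on \(\{u_n\}\) under \([\mathfrak{D}]\) is asserted by analogy with Hess and Dole\v{z}al rather than derived, though that is no worse than the paper's own level of detail in Lemma~\ref{lemma:dev_1}.
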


\textbf{Assumptions [I]:}
\begin{itemize}
    \item[(a)]: The operators \( K, N \) and \( H \) satisfy any one of the sets of Assumptions [$\mathfrak{A}$] or Assumptions [$\mathfrak{B}$].
    \item[(b)]: For a fixed \( x, u \to \phi(u,x) \) is convex and continuous and \( x \to \phi(u,x) \) is continuous and it is uniform for all \( u \in U \).
    \item[(c)]: The control set \( U \) is weakly compact.
\end{itemize}

\begin{theorem}\label{theorem:Th_1_dev}
    Under Assumptions [I], the system (\ref{eq:Dev_1}) has an optimal pair $(u^*, x^*)$.
\end{theorem}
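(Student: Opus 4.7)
The plan is to argue existence of the optimal pair by the classical direct method of the calculus of variations, exploiting the weak compactness of the control set together with the complete continuity of the system operator $T$ established in Lemma \ref{lemma:dev_1}.

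First I would set up the problem in the reduced form. Since under Assumption [I](a) the operator $T$ is well defined and completely continuous, the problem of minimizing the cost functional $\phi(u,x)$ subject to the state equation $x = Tu$ becomes the unconstrained minimization of the functional $\psi(u) := \phi(u, Tu)$ over $u \in U$. Let $m := \inf_{u \in U} \psi(u)$; one should first check $m$ is finite, which follows since $\phi$ maps into $\bar{\mathbb{R}}_+$ and $U$ is nonempty (by (c) it is weakly compact, hence nonempty, and we may assume there is at least one admissible control giving a finite value, otherwise the statement is vacuous). Pick a minimizing sequence $\{u_n\} \subset U$ with $\psi(u_n) \to m$.

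Next I would pass to the limit. By Assumption [I](c), $U$ is weakly compact, so there exists a subsequence (still denoted $\{u_n\}$) and $u^* \in U$ with $u_n \rightharpoonup u^*$. Since $T$ is completely continuous (Lemma \ref{lemma:dev_1}), it maps weakly convergent sequences to strongly convergent ones, so $x_n := T u_n \to T u^* =: x^*$ in $X$. The candidate optimal pair is $(u^*, x^*)$, and by construction $x^* = T u^*$ solves the state equation \eqref{eq:Dev_1}.

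The crux is showing $\psi(u^*) \leq m$, which requires lower semicontinuity. Assumption [I](b) gives, for each fixed $x$, that $u \mapsto \phi(u,x)$ is convex and continuous on the Banach space $U$, hence weakly lower semicontinuous; combined with the uniform continuity in $x$, I would split
\begin{equation*}
\phi(u^*, x^*) - \phi(u_n, x_n) = \bigl[\phi(u^*, x^*) - \phi(u_n, x^*)\bigr] + \bigl[\phi(u_n, x^*) - \phi(u_n, x_n)\bigr].
\end{equation*}
For the first bracket, weak lower semicontinuity of $\phi(\cdot, x^*)$ yields $\phi(u^*, x^*) \leq \liminf_n \phi(u_n, x^*)$. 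For the second, uniform-in-$u$ continuity of $\phi(u,\cdot)$ together with $x_n \to x^*$ forces $|\phi(u_n,x^*) - \phi(u_n,x_n)| \to 0$. Combining, $\phi(u^*, x^*) \leq \liminf_n \phi(u_n, x_n) = m$, and since $u^* \in U$ gives $\phi(u^*,x^*) \geq m$, equality holds and $(u^*, x^*)$ is an optimal pair.

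The main obstacle I anticipate is the careful handling of the uniform continuity condition in Assumption [I](b) to control the second bracket above; one must make sure that the uniformity in $u$ is strong enough to absorb the dependence on the sequence index $n$. If the uniform continuity is interpreted modulus-wise (a single modulus of continuity working for all $u \in U$), the argument closes cleanly; otherwise one would need to invoke the boundedness of $\{u_n\}$ and possibly reinforce the hypothesis. All other steps (weak compactness, complete continuity, weak lower semicontinuity of convex continuous functionals on a reflexive Banach space) are standard.
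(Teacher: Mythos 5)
Your proposal is correct and follows essentially the same route as the paper: weak compactness of $U$ to extract a weakly convergent minimizing (sub)sequence, complete continuity of $T$ (Lemma \ref{lemma:dev_1}) to upgrade $u_n \rightharpoonup u^*$ to $Tu_n \to Tu^*$ strongly, and then convexity/continuity in $u$ plus uniform continuity in $x$ to get weak lower semicontinuity of $J(u)=\phi(u,Tu)$. The only difference is that where the paper invokes Browder's theorem applied to $g(u,v)=\phi(u,Tv)$ to obtain that lower semicontinuity, you prove it directly via the two-term splitting, which makes the argument self-contained but is otherwise the same proof.
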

\begin{proof}
We first show that $J$ is weakly lower semicontinuous. That is, $J(u^*) \leq \lim \inf_n J(u_n)$ whenever $u_n \rightharpoonup u^*$ in $U$.\\

Let $g(u,v)$ be real valued function on $U \times X$ defined by\\

\[
g(u,v) = \phi(u,Tv)
\]
Given Assumption [I(b)], $u \to g(u, Tv)$ is continuous and convex for a fixed $v \in X$. Also, Assumption [I(a)] implies (by Lemma \ref{lemma:dev_1}) that it $T$ is completely continuous. This, together with Assumption [I(b)], gives that it $v \to g(u, v)$ is weakly continuous uniformly for all $u \in U$. As all assumptions of Theorem 1 of Browder \cite{browder1965remarks} are satisfied, it follows that
\[
J(u) = \phi(u, Tu) = g(u,v)
\]

is weakly lower semicontinuous.

As $U$ is weakly compact and $J$ is weakly lower semicontinuous, then there exists $u^* \in U$ such that
\[
\phi(u^*, x^*) = J(u^*) \leq \inf_{u \in U} J(u) = \inf_{u \in U} \phi(u,x)
\]
where $x = Tu$ and $x^* = Tu^*$.\\
This proves that it $(u^*, x^*)$ is an optimal pair for the abstract system (\ref{eq:Dev_1}).
\end{proof}

\textbf{Assumptions [II]:}
\begin{itemize}
    \item[(a)]:  The operators $K, N$, and $H$ satisfy either Assumptions [$\mathfrak{C}$] or Assumptions [$\mathfrak{D}$].
    \item[(b)]:  For a fixed $x$, $u \to \phi(u,x)$ is convex and continuous and $x \to \phi(u,x)$ is weakly continuous and is uniform for all $u \in U$.
    \item[(c)]: The control set \( U \) is weakly compact.
\end{itemize}

\begin{theorem}\label{theorem:Th_2_dev}
    Under Assumptions [II], the system (\ref{eq:Dev_1}) has an optimal pair $(u^*, x^*)$.
\end{theorem}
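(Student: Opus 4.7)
The plan is to mirror the structure of the proof of Theorem \ref{theorem:Th_1_dev}, but with weak continuity replacing complete continuity at the appropriate step. Under Assumption [II(a)], Lemma \ref{lemma:dev_2} guarantees that the system operator $T$ is weakly continuous, which takes the place of the complete continuity of $T$ used in the previous theorem. The overall strategy is again to establish weak lower semicontinuity of $J$ and then invoke the weak compactness of $U$ from Assumption [II(c)] to obtain a minimizer.

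First, I would define $g : U \times U \to \mathbb{R}$ by $g(u,v) = \phi(u, Tv)$. By Assumption [II(b)], for each fixed $v$ the map $u \mapsto g(u,v) = \phi(u, Tv)$ is convex and continuous. For the remaining hypothesis of Browder's theorem, I would verify that $v \mapsto g(u,v)$ is weakly continuous uniformly in $u \in U$. This is exactly where the modified hypotheses of Assumptions [II] are tailored: if $v_n \rightharpoonup v^{*}$ in $U$, then weak continuity of $T$ (Lemma \ref{lemma:dev_2}) gives $T v_n \rightharpoonup T v^{*}$ in $X$, and the weak continuity of $x \mapsto \phi(u,x)$ uniformly in $u$ from (b) then yields $\phi(u, T v_n) \to \phi(u, T v^{*})$ uniformly in $u \in U$.

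With these two conditions in hand, Theorem 1 of Browder \cite{browder1965remarks} applies and gives that $J(u) = g(u,u) = \phi(u, Tu)$ is weakly lower semicontinuous on $U$. Combining this with the weak compactness of $U$ from Assumption [II(c)], a standard direct-method argument produces $u^{*} \in U$ with
\[
\phi(u^{*}, x^{*}) = J(u^{*}) \leq \inf_{u \in U} J(u),
\]
where $x^{*} = T u^{*}$, and $(u^{*}, x^{*})$ is the sought optimal pair.

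The main obstacle is justifying the uniform-in-$u$ weak continuity of $v \mapsto \phi(u, Tv)$. In Theorem \ref{theorem:Th_1_dev} this step relied on complete continuity of $T$, which converts weak convergence to norm convergence and allows ordinary continuity of $\phi(u, \cdot)$ to carry the argument. Here $T$ is only weakly continuous, so the composition $\phi(u, T(\cdot))$ can be handled only because Assumption [II(b)] upgrades continuity in the state variable to weak continuity, uniformly in the control. Checking that this composition preserves the \emph{uniform} character of the weak continuity, rather than merely pointwise weak continuity, is the delicate technical point to nail down before Browder's criterion is applicable.
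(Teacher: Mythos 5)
Your proposal is correct and is exactly the argument the paper intends: the paper states Theorem \ref{theorem:Th_2_dev} without proof precisely because it is the analogue of Theorem \ref{theorem:Th_1_dev} with Lemma \ref{lemma:dev_2} (weak continuity of $T$) replacing Lemma \ref{lemma:dev_1}, and with the weak continuity of $x \mapsto \phi(u,x)$ from Assumption [II(b)] compensating for the loss of complete continuity of $T$ in the verification of Browder's hypotheses. You have also correctly isolated the one delicate point, namely that the uniformity in $u$ of the weak continuity of $v \mapsto \phi(u,Tv)$ must be preserved under composition with $T$, which is exactly what [II(b)] is designed to supply.
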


    In most of the optimal control problems, the functional $\phi(u,x)$ is of quadratic type with respect to both $u$ and $x$ and, in such cases, weaker forms of continuity, viz., lower semicontinuity or weak lower semicontinuity of $\phi$ is more accessible to check. So, in the following theorems, we prove the existence of optimal pair $(u^*, x^*)$ with lower semi-continuity (weak lower semicontinuity) assumptions on $\phi$ instead of continuity (weak continuity) assumptions. For such functionals, we investigate directly the unconstrained problem, where $U$ is the whole space $Z$, which is assumed to be a reflexive Banach space.

\textbf{Assumptions [III]:}
\begin{itemize}
    \item[(a)]: The operators $K, N$, and $H$ satisfy either Assumptions [$\mathfrak{A}$] or Assumptions [$\mathfrak{B}$].
    \item[(b)]:  $\phi : Z \times T(Z) \to \mathbb{R}$ is a lower semicontinuous function with respect to the weak topology in $Z$ and norm topology in $T(Z)$ and $\|\cdot\| \to \infty$ implies $\phi(u, Tu) \to \infty, u \in U$.
\end{itemize}

\begin{theorem}\label{theorem:Th_3_dev}
    Under Assumptions [III], the system (\ref{eq:Dev_1}) has an optimal pair $(u^*, x^*)$.
\end{theorem}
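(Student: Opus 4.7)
The plan is to establish the existence of an optimal pair via the direct method of the calculus of variations, exploiting reflexivity of $Z$, the complete continuity of $T$ from Lemma \ref{lemma:dev_1}, and the weak-norm lower semicontinuity of $\phi$ together with its coercivity.

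First I would define the reduced cost functional $J(u) = \phi(u, Tu)$ on $Z$ and let $\inf_{u \in Z} J(u) = m$. The coercivity hypothesis in Assumption [III](b), namely $\|u\| \to \infty$ implies $\phi(u, Tu) \to \infty$, immediately rules out $m = -\infty$ being approached along unbounded sequences, and more importantly it ensures that any minimizing sequence $\{u_n\} \subset Z$ with $J(u_n) \to m$ must be norm-bounded in $Z$.

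Next, since $Z$ is a reflexive Banach space and $\{u_n\}$ is bounded, I can extract a subsequence (still denoted $\{u_n\}$) such that $u_n \rightharpoonup u^*$ for some $u^* \in Z$. By Assumption [III](a) the operators $K$, $N$, $H$ satisfy either [$\mathfrak{A}$] or [$\mathfrak{B}$], so Lemma \ref{lemma:dev_1} applies and $T$ is completely continuous. Complete continuity maps weakly convergent sequences to strongly convergent ones, so $x_n := T u_n \to T u^* =: x^*$ in the norm topology of $T(Z)$.

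Now I would invoke the lower semicontinuity hypothesis in [III](b): $\phi$ is lower semicontinuous on $Z \times T(Z)$ with respect to the product topology induced by the weak topology on $Z$ and the norm topology on $T(Z)$. Since $(u_n, x_n) \to (u^*, x^*)$ in precisely this product topology, it follows that
\begin{equation*}
\phi(u^*, x^*) \leq \liminf_{n \to \infty} \phi(u_n, x_n) = \liminf_{n \to \infty} J(u_n) = m.
\end{equation*}
Combined with $m \leq \phi(u^*, x^*)$ (by definition of infimum), we conclude $\phi(u^*, x^*) = m$, so $(u^*, x^*)$ is the desired optimal pair, and by construction $x^* = T u^*$ is a solution of the abstract equation \eqref{eq:Dev_1} corresponding to the control $u^*$.

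The only real subtlety I anticipate is matching the topology in which the lower semicontinuity is assumed with the convergence modes we actually produce; fortunately the hypothesis has been engineered exactly so that weak convergence in $Z$ and norm convergence in $T(Z)$ suffice, which is precisely what complete continuity of $T$ delivers. A minor point worth checking is that the infimum is finite: if $m = -\infty$ one would pick $u_n$ with $J(u_n) \to -\infty$, still bounded by coercivity, and the same argument yields $\phi(u^*, x^*) = -\infty$, which is impossible since $\phi$ is real-valued; hence $m > -\infty$ and the argument above produces a genuine minimizer.
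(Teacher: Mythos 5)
Your proposal is correct and follows essentially the same route as the paper's proof: a minimizing sequence, boundedness from coercivity, weak compactness from reflexivity of $Z$, strong convergence of the states via the complete continuity of $T$ from Lemma \ref{lemma:dev_1}, and then the weak--norm lower semicontinuity of $\phi$ to pass to the limit. The only addition is your explicit check that the infimum is finite, which the paper leaves implicit.
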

\begin{proof}
     Let $\{u_n\}$ be a minimizing sequence of controls in $Z$. That is,
\[
\lim_{n \to \infty} \phi(u_n, Tu_n) = \inf_{u \in Z} \phi(u, Tu) = m (\text{say}).
\]
From Assumption [III(b)], $\{u_n\}$ is bounded. Since $Z$ is reflexive, by extracting a subsequence, we can assume that $u_n \to u^*$ in $Z$. In view of Assumption [III(a)], Lemma \ref{lemma:dev_1} implies that the system operator is completely continuous and hence $Tu_n \to Tu^*$ strongly in $X$. Assumption [III(b)] gives
\[
\phi(u^*, Tu^*) \leq \lim_{n \to \infty} \phi(u_n, Tu_n) \quad \text{whenever } u_n \to u^*.
\]
This in turn implies that
\[
m = \inf_{u \in Z} \phi(u, Tu) \geq \phi(u^*, Tu^*).
\]
That is, $m = \phi(u^*, Tu^*)$ and $(u^*, x^*)$ is the desired optimal pair, where $x^* = Tu^*$.
\end{proof}

When the system operator $T$ is weakly continuous we have the following result, the proof of which follows by using Lemma \ref{lemma:dev_2}.

\textbf{Assumptions [IV]:}
\begin{itemize}
    \item[(a)] The operators $K, N$, and $H$ satisfy either Assumptions [$\mathfrak{C}$] or Assumptions [$\mathfrak{D}$].
    \item[(b)] $\phi : Z \times T(Z) \to \mathbb{R}$ is lower semicontinuous with respect to the weak topologies in $Z$ and $T(Z)$ and further $\|u\| \to \infty$ implies $\phi(u, Tu) \to \infty, u \in Z$.
\end{itemize}

\begin{theorem}\label{theorem:Th_4_dev}
    Under Assumptions [IV] the nonlinear system (\ref{eq:Dev_1}) has an optimal pair $(u^*, x^*)$.
\end{theorem}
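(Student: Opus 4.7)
The plan is to mirror the argument of Theorem \ref{theorem:Th_3_dev}, replacing strong/compact structure with weak structure throughout, since now the system operator $T$ is only guaranteed to be weakly continuous (via Lemma \ref{lemma:dev_2}) rather than completely continuous.

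First I would pick a minimizing sequence $\{u_n\} \subset Z$ with
\[
\lim_{n \to \infty} \phi(u_n, T u_n) \;=\; \inf_{u \in Z} \phi(u, T u) \;=:\; m.
\]
The coercivity part of Assumption [IV(b)] ($\|u\| \to \infty \Rightarrow \phi(u,Tu) \to \infty$) immediately forces $\{u_n\}$ to be norm-bounded, since otherwise a subsequence would blow up $\phi(u_n, Tu_n)$ and contradict that the sequence is minimizing toward the finite or $-\infty$ infimum. (If one is worried about $m = -\infty$, the coercivity assumption together with lower semicontinuity rules this out a posteriori, but it is cleanest to observe boundedness directly from the fact that $\phi(u_n,Tu_n)$ is bounded above.) Using reflexivity of $Z$, I extract a subsequence (not relabeled) with $u_n \rightharpoonup u^*$ in $Z$.

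Next I would invoke Lemma \ref{lemma:dev_2}: under Assumption [IV(a)] (either [$\mathfrak{C}$] or [$\mathfrak{D}$]), the system operator $T$ is weakly continuous, hence $T u_n \rightharpoonup T u^*$ in $T(Z) \subset X$. The two weak convergences together mean $(u_n, Tu_n) \to (u^*, Tu^*)$ in the product of the weak topologies on $Z$ and $T(Z)$. Now the weak lower semicontinuity hypothesis in Assumption [IV(b)] applies directly to give
\[
\phi(u^*, T u^*) \;\leq\; \liminf_{n \to \infty} \phi(u_n, T u_n) \;=\; m.
\]
Since by definition $m \leq \phi(u^*, Tu^*)$ as well, equality holds and $(u^*, x^*) := (u^*, T u^*)$ is the desired optimal pair.

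The main obstacle, and the reason this theorem is genuinely distinct from Theorem \ref{theorem:Th_3_dev}, is passing to the limit in $Tu_n$. In Theorem \ref{theorem:Th_3_dev} complete continuity of $T$ delivers strong convergence $Tu_n \to Tu^*$, which pairs naturally with an lsc assumption mixing weak topology in $Z$ and norm topology in $T(Z)$. Here we only have weak convergence of $Tu_n$, so the hypotheses on $\phi$ must be strengthened to weak lower semicontinuity on both factors — which is precisely what Assumption [IV(b)] provides. Once Lemma \ref{lemma:dev_2} is in hand, no further technical work is required; the argument is a clean application of the direct method of the calculus of variations in the weak-weak setting.
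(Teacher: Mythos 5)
Your proposal is correct and is exactly the argument the paper intends: the paper omits the proof, stating only that it ``follows by using Lemma \ref{lemma:dev_2},'' i.e.\ by repeating the direct-method argument of Theorem \ref{theorem:Th_3_dev} with complete continuity of $T$ replaced by weak continuity and the mixed weak--norm lower semicontinuity of $\phi$ replaced by weak--weak lower semicontinuity. Your write-up simply makes that implicit argument explicit, so there is nothing to add.
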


Let the explicit representation for the cost functional be given by
\begin{equation}\label{eq:Dev_2}
\phi(u, x) = (u, Ru) + (x, Wx)
\end{equation}
where $R : Z \to Z^*$ is a bounded linear symmetric, strictly monotone and coercive operator and $W : X \to X^*$ is a bounded linear symmetric monotone operator.\\
 As a corollary of the above Theorem  \ref{theorem:Th_4_dev} we have the following result.

\begin{corollary}\label{cor:dev_1}
     If the system operator is weakly continuous then the system (\ref{eq:Dev_1}) with respect to the cost functional (\ref{eq:Dev_2}) has a unique optimal pair $(u^*, x^*)$. 
\end{corollary}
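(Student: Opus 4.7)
The plan is to obtain existence by invoking Theorem \ref{theorem:Th_4_dev} and then deduce uniqueness from the strict monotonicity of $R$. Since $T$ is assumed weakly continuous, Assumption [IV(a)] is in force automatically, so the main work is to verify [IV(b)] for the reduced functional $J(u):=\phi(u,Tu)=(u,Ru)+(Tu,WTu)$ on $Z$.

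For weak lower semicontinuity, I would observe that $u\mapsto(u,Ru)$ is continuous and convex on $Z$, because $R$ is bounded, linear, symmetric, and monotone; hence it is weakly l.s.c. Similarly $x\mapsto(x,Wx)$ is continuous and convex on $X$ (as $W$ is bounded linear symmetric monotone), hence weakly l.s.c. Since the composition of a weakly l.s.c.\ function with a weakly continuous map is again weakly l.s.c., $u\mapsto(Tu,WTu)$ is weakly l.s.c., and so is the sum $J$.

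For the coercivity condition, the coercivity of $R$ supplies a constant $c>0$ with $(u,Ru)\ge c\|u\|^2$, while linearity plus monotonicity of $W$ give $(Tu,WTu)\ge 0$. Therefore $J(u)\ge c\|u\|^2\to\infty$ as $\|u\|\to\infty$, and Theorem \ref{theorem:Th_4_dev} furnishes an optimal pair $(u^*,x^*)$ with $x^*=Tu^*$.

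Uniqueness should follow from strict convexity: the strict monotonicity of $R$ makes $u\mapsto(u,Ru)$ strictly convex, and adding the (convex) term $u\mapsto(Tu,WTu)$ keeps the sum strictly convex, so the minimizer of $J$ is unique. I expect the main technical obstacle to sit precisely at this last step, since convexity of the composed map $u\mapsto(Tu,WTu)$ is not automatic when $T$ is genuinely nonlinear; one therefore has to argue either by linearity of $T$ in the quadratic-cost regime, or by comparing the first-order (G\^ateaux) optimality conditions of two putative minimizers $u_1^*,u_2^*$ and exploiting the coercive/strict-monotonicity estimate $(R(u_1^*-u_2^*),u_1^*-u_2^*)\ge c\|u_1^*-u_2^*\|^2$ to force $u_1^*=u_2^*$, whence $x_1^*=Tu_1^*=Tu_2^*=x_2^*$.
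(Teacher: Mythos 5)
Your existence argument is essentially the paper's own: the paper likewise decomposes $\phi(u,x)=(u,Ru)+(x,Wx)$ into $\phi_1(u)=(u,Ru)$ and $\phi_2(x)=(x,Wx)$, obtains convexity and weak lower semicontinuity of each summand from the boundedness, symmetry and monotonicity of $R$ and $W$ (citing Vainberg's theorems on gradient operators rather than the direct ``convex $+$ continuous $\Rightarrow$ weakly l.s.c.''\ route you take, but the content is identical), derives coercivity of $J$ from the coercivity of $R$, and then invokes Theorem~\ref{theorem:Th_4_dev}. Up to that point your proposal and the paper coincide.

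The worry you flag at the uniqueness step is genuine, and it is worth noting that the paper's proof does not resolve it either: the paper asserts that strict monotonicity of $R$ makes $\phi_1$ strictly convex ``and so is $\phi$,'' and then concludes uniqueness ``from the fact that the cost functional is strictly convex in $Z$.'' But strict convexity of $\phi(u,x)$ in the pair $(u,x)$ does not transfer to the reduced functional $J(u)=(u,Ru)+(Tu,WTu)$ unless $u\mapsto (Tu,WTu)$ is itself convex, and weak continuity of $T$ gives no such convexity when $T$ is genuinely nonlinear --- which is exactly the situation of interest, since $T=[I-KN]^{-1}H$ with nonlinear $N$. Your proposed repair via the first-order conditions also does not close the gap on its own: subtracting the two stationarity identities for putative minimizers $u_1^*,u_2^*$ leaves, besides the good term $\langle R(u_1^*-u_2^*),u_1^*-u_2^*\rangle\ge c\|u_1^*-u_2^*\|^2$, the uncontrolled term $\langle T'(u_1^*)^*WTu_1^*-T'(u_2^*)^*WTu_2^*,\,u_1^*-u_2^*\rangle$, whose sign requires further hypotheses (monotonicity of that composed map, affineness of $T$, or a smallness condition on $\|W\|$ and the Lipschitz data of $T$ relative to $c$). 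So both your proposal and the paper establish existence, but uniqueness as stated needs an additional assumption; you were right to single this out as the weak point.
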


\begin{proof}
    Set $\phi_1(u) = (u, Ru)$ and $\phi_2(x) = (x, Wx)$. So $\phi(u, x)$ can be written as
\[
\phi(u, x) = \phi_1(u) + \phi_2(x).
\]

We first observe that $\phi_1$ and $\phi_2$ are differentiable functionals with gradients $2R$ and $2W$, respectively. Also, $R$ and $W$ are monotone operators by assumption and hence by Theorem 6.1 and Theorem 8.4 of Vainberg \cite{vainberg1973variational} it follows that $\phi_1$ and $\phi_2$ are weakly lower semicontinuous convex functionals. Moreover, the strict monotonicity of $R$ implies that $\phi_1$ is strictly convex and so is $\phi$.

Further, since $R$ is coercive, that is
\[
\frac{(u, Ru)}{\|u\|} \to \infty \quad \text{as } \|u\| \to \infty,
\]
we have that $\phi(u, Tu) \to \infty$ as $\|u\| \to \infty$.

Thus $\phi : Z \times T(Z) \to \mathbb{R}$ is lower semicontinuous in the weak topologies of $Z$ and $T(Z)$ and is coercive. Now applying Theorem \ref{theorem:Th_4_dev} it follows immediately that there exists an optimal pair $(u^*, x^*)$ for the system (\ref{eq:P}) with respect to the cost functional (\ref{eq:Dev_2}). 

The uniqueness of $(u^*, x^*)$ follows from the fact that the cost functional is strictly convex in $Z$.
\end{proof}

\section{OPTIMALITY SYSTEM FOR THE ABSTRACT PROBLEM}

In this section, we investigate Problem 2.2(XXXXXXXX). We assume that the state space $X$ and the control space $Z$ are real Hilbert spaces. Also, throughout this section, the cost functional under consideration is of the form (4.1). That is, $J(u) = \Phi(u, x)$ is of the form
\begin{equation}\label{eq:OPTIMALITY_1}
J(u) = (u, Ru) + (x, Wx)
\end{equation}
where $R$ and $W$ satisfy the earlier assumptions of Section 4(XXXXXXXXXXX). Note that in view of monotonicity and coercivity assumptions on $R$, $R$ is invertible as a bounded linear operator (refer to Joshi and Bose \cite{joshi1985some}). This fact will be used in the subsequent analysis.

Recall that the system operator $T$ is of the form
\begin{equation}\label{eq:OPTIMALITY_2}
T u = [I - K N]^{-1} H u, \quad u \in Z.
\end{equation}

The following lemma gives the existence of the derivative of the system operator $T$ under certain conditions on $K, N$, and $H$.

\textbf{Assumptions [V]:}
\begin{itemize}
    \item[(a)]:  $K, N$, and $H$ are Fréchet differentiable with $K'(x) = K$, $N'(x) = G(x)$ for all $x \in X$ and $H'(u) = H$ for all $u \in Z$. 
    \item[(b)]: $[I - K G(x)]^{-1}$ exists as a bounded linear operator for all $x = Tu, u \in Z$.
\end{itemize}

\begin{lemma}\label{lemma:dev_3}
    Under Assumptions [V], the system operator $T$ is Fréchet differentiable with derivative given by
\begin{equation}\label{eq:OPTIMALITY_3}
T'(u) = [I - K G(x)]^{-1} H, \quad u \in Z \quad \text{and} \quad x = Tu.
\end{equation}
\end{lemma}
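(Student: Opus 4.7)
The plan is to verify Fréchet differentiability of $T$ directly from the defining identity $x = Tu \iff x = KNx + Hu$, essentially unwinding the implicit function theorem by hand. Fix $u \in Z$, a small perturbation $h \in Z$, and set $x = Tu$ and $x_h = T(u+h)$. Subtracting the two defining identities $x = KNx + Hu$ and $x_h = KNx_h + H(u+h)$ immediately produces
\[
x_h - x = K\bigl[N(x_h) - N(x)\bigr] + Hh.
\]

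Next, invoking the Fréchet differentiability of $N$ at $x$ from Assumption [V(a)], I write $N(x_h) - N(x) = G(x)(x_h - x) + r(x_h - x)$, where the remainder satisfies $\|r(y)\|/\|y\| \to 0$ as $\|y\| \to 0$. The identity above rearranges to $[I - KG(x)](x_h - x) = Hh + Kr(x_h - x)$, and Assumption [V(b)] allows inversion of $I - KG(x)$, giving
\[
x_h - x = [I - KG(x)]^{-1}Hh + [I - KG(x)]^{-1}K\,r(x_h - x). \quad (\ast)
\]
This formally identifies the candidate derivative $T'(u)h = [I - KG(x)]^{-1}Hh$; it then remains to show that the second summand is $o(\|h\|)$.

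For that I need the a priori estimate $\|x_h - x\| = O(\|h\|)$. Set $M := \|[I - KG(x)]^{-1}K\|$, $M' := \|[I - KG(x)]^{-1}H\|$, and $\varepsilon(y) := \|r(y)\|/\|y\|$ (so $\varepsilon(y) \to 0$ as $y \to 0$). From $(\ast)$ one extracts
\[
\|x_h - x\| \leq M'\|h\| + M\,\varepsilon(x_h - x)\,\|x_h - x\|.
\]
Continuity of $T$---already available via Lemma \ref{lemma:dev_1}, since complete continuity implies continuity---guarantees $x_h \to x$ as $h \to 0$. Hence for $\|h\|$ small enough $M\,\varepsilon(x_h - x) < 1/2$, yielding $\|x_h - x\| \leq 2M'\|h\|$. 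Reinserting this bound into $(\ast)$ produces
\[
\|x_h - x - [I - KG(x)]^{-1}Hh\| \leq 2MM'\,\varepsilon(x_h - x)\,\|h\| = o(\|h\|),
\]
which is precisely Fréchet differentiability of $T$ at $u$ with the claimed derivative.

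The main obstacle is the bootstrap just sketched: to extract the $o(\|h\|)$ conclusion one needs the quantitative bound $\|x_h - x\| = O(\|h\|)$, which in turn relies on continuity of $T$ to drive $\varepsilon(x_h - x)$ below the threshold $1/(2M)$. Routing this through Lemma \ref{lemma:dev_1} is the cleanest option; alternatively one could rerun a small contraction-mapping argument on $(\ast)$ in a ball around $x$, but that merely duplicates structure already established. Beyond this self-contained bootstrap, the argument is purely algebraic manipulation of the fixed-point equation defining $T$.
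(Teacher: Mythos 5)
The paper states Lemma \ref{lemma:dev_3} without any proof, so there is no argument of the authors' to compare against; your write-up supplies the missing proof, and it is essentially correct. The route you take is the standard one: subtract the two fixed-point identities, linearize $N$ at $x$, invert $I-KG(x)$ via Assumption [V(b)], and then bootstrap the a priori bound $\|x_h-x\|=O(\|h\|)$ to absorb the remainder. The algebra in $(\ast)$ and the final $o(\|h\|)$ estimate are all sound. The one point you should make explicit rather than gloss over is the source of the continuity of $T$: Assumptions [V] by themselves say nothing that forces $T(u+h)\to T(u)$, and Lemma \ref{lemma:dev_1} delivers (complete) continuity only under Assumptions [$\mathfrak{A}$] or [$\mathfrak{B}$]. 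In the paper's narrative those hypotheses are in force throughout the optimality section (the existence of the optimal pair in Corollary \ref{cor:dev_1} already presupposes them), so your appeal to Lemma \ref{lemma:dev_1} is legitimate in context, but the lemma as stated is not self-contained: either record that [$\mathfrak{A}$]/[$\mathfrak{B}$] (or at least continuity of $T$) is assumed alongside [V], or carry out the local contraction argument on $(\ast)$ that you sketch as an alternative, which closes the loop using only [V(a)], [V(b)] and the boundedness of $K$. With that bookkeeping fixed, the proof is complete.
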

\begin{lemma}\label{lemma:dev_4}
  Suppose that the system operator \(T\) is Fréchet differentiable then the cost functional \(J\) given by (\ref{eq:OPTIMALITY_1}) is Fréchet differentiable with derivative \(J'(u)\) given by
\[
\frac{1}{2} J'(u) = \langle T'(u)h, WTu \rangle + \langle h, Ru \rangle, \quad u \in Z.
\]
\end{lemma}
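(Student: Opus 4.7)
The plan is to decompose $J$ as a sum of two functionals, compute each derivative separately using the symmetry of $R$ and $W$, and then apply the chain rule together with the hypothesis that $T$ is Fréchet differentiable. Write $J(u) = \phi_1(u) + \phi_2(Tu)$, where $\phi_1 : Z \to \mathbb{R}$ is defined by $\phi_1(u) = \langle u, Ru \rangle$ and $\phi_2 : X \to \mathbb{R}$ is defined by $\phi_2(x) = \langle x, Wx \rangle$.

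First I would compute the Fréchet derivatives of $\phi_1$ and $\phi_2$ directly from the definition. For $\phi_1$, expanding $\phi_1(u+h) - \phi_1(u)$ using bilinearity of the inner product gives $\langle h, Ru \rangle + \langle u, Rh \rangle + \langle h, Rh \rangle$. The symmetry of $R$ (together with the self-duality of the Hilbert space) lets us collapse the first two terms into $2\langle h, Ru \rangle$, while boundedness of $R$ yields $\langle h, Rh \rangle = o(\|h\|)$. Hence $\phi_1'(u)h = 2\langle h, Ru \rangle$. An identical argument, using the symmetry and boundedness of $W$, shows that $\phi_2$ is Fréchet differentiable with $\phi_2'(x)k = 2\langle k, Wx \rangle$.

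Next I would invoke the chain rule for Fréchet derivatives. Since $T : Z \to X$ is Fréchet differentiable by assumption and $\phi_2$ is Fréchet differentiable on $X$, the composition $u \mapsto \phi_2(Tu)$ is Fréchet differentiable with
\begin{equation*}
(\phi_2 \circ T)'(u) h \;=\; \phi_2'(Tu)\bigl(T'(u)h\bigr) \;=\; 2\langle T'(u)h,\, W T u \rangle.
\end{equation*}
Adding the two pieces gives $J'(u)h = 2\langle T'(u)h, WTu \rangle + 2\langle h, Ru \rangle$, which rearranges to the claimed identity after dividing by $2$.

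There is no serious obstacle here; the one point to be mildly careful about is the justification that the inner product $\langle u, Rh \rangle$ equals $\langle h, Ru \rangle$, which uses both the symmetry assumption on $R$ and the fact that we are in a real Hilbert space so no complex conjugation intervenes. The same remark applies to $W$. Beyond that, everything reduces to a routine chain-rule computation built on the two pre-established results that quadratic forms with bounded symmetric operators are Fréchet differentiable with gradient equal to twice the operator.
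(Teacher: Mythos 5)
Your proposal is correct. It reaches the same formula as the paper but by a different organizational route: you decompose $J = \phi_1 + \phi_2\circ T$ with $\phi_1(u)=\langle u,Ru\rangle$, $\phi_2(x)=\langle x,Wx\rangle$, differentiate the two quadratic forms separately, and then invoke the chain rule for Fréchet derivatives. The paper instead works bare-handed: it forms the single expression $J(u+h)-J(u)-2\langle T'(u)h,WTu\rangle-2\langle h,Ru\rangle$, uses the symmetry of $R$ and $W$ to reduce it to
\[
2\langle T(u+h)-Tu-T'(u)h,\,WTu\rangle+\langle T(u+h)-Tu,\,W(T(u+h)-Tu)\rangle+\langle h,Rh\rangle,
\]
and then estimates each term after dividing by $\|h\|$, explicitly appealing to the fact that Fréchet differentiability makes $T$ locally Lipschitz in order to kill the middle (quadratic-in-$T$) term. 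Your approach buys brevity and modularity — the local-Lipschitz estimate is absorbed into the standard proof of the chain rule, which you are entitled to cite — while the paper's approach makes the error control visible and self-contained, which matters here only insofar as the authors want to record where the local Lipschitz property of $T$ is used. Both arguments rely on the same two facts (symmetry/boundedness of $R$ and $W$, and differentiability of $T$), so there is no gap; just make sure you state the chain rule in the Banach/Hilbert space setting rather than assuming the finite-dimensional version, since $T'(u)$ must be a bounded linear operator for the composition formula $\phi_2'(Tu)\circ T'(u)$ to make sense.
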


\begin{proof}
     We have \(J(u) = \langle u, Ru \rangle + \langle x, Wx \rangle\).

As \(R\) and \(W\) are bounded symmetric linear operators, we get
\begin{align*}
    & J(u + h) - J(u) - 2\langle T'(u)h, WTu \rangle - 2\langle h, Ru \rangle\\
    & = \langle T(u + h), WT(u + h) \rangle - \langle Tu, WTu \rangle + \langle u + h, R(u + h) \rangle\\
    &\quad - 2\langle h, Ru \rangle - \langle u, Ru \rangle - 2\langle T'(u)h, WTu \rangle\\
    &=2\langle T(u + h) - Tu - T'(u)h, WTu \rangle\\
    &\quad + \langle T(u + h) - Tu, W(T(u + h) - T(u)) \rangle + \langle h, Rh \rangle.
\end{align*}

This implies that
\begin{align}\label{eq:OPTIMALITY_4}
   & \frac{\| J(u + h) - J(u) - 2\langle T'(u)h, WTu \rangle - 2\langle h, Ru \rangle \|}{\| h \|}\notag\\
    & \leq 2\|WTu\| \frac{\| T(u + h) - T(u + h) - T(u) - T'(u)h \|}{\| h \|}\notag\\
    &\quad + \| W \| \frac{\| T(u + h) - T(u) \|^2}{\| h \|} + \| R \| \| h \|.
\end{align}

In view of Fréchet differentiability of \(T\), the first term on the RHS of (\ref{eq:OPTIMALITY_4}) goes to zero as \(\|h\| \to 0\). Also, Fréchet differentiability of \(T\) implies that \(T\) is locally Lipschitz (refer Joshi and Bose \cite{joshi1985some}) and hence the second term also goes to zero as \(\|h\| \to 0\). This proves that RHS of (\ref{eq:OPTIMALITY_4}) goes to zero and hence LHS goes to zero as \(\|h\| \to 0\).

This gives
\[
\frac{1}{2} J'(u)h = \langle T'(u)h, WTu \rangle + \langle h, Ru \rangle, \quad u \in Z.
\]
\end{proof}

The following theorem gives the optimality system for (\ref{eq:Dev_1}). Here the superscript \(*\) corresponding to a given operator denotes its adjoint.

\begin{theorem}\label{theorem:Th_5_dev}
    Under Assumptions [V] the `optimality system' for (\ref{eq:Dev_1}) is given by
\[
x^* = KNx^* + Hu^*
\]
\[
u^* = -R^{-1}H^*[I - KG(x^*)]^{*-1}Wx^*.
\]
\end{theorem}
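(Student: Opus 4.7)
The plan is to combine the state equation with the vanishing-gradient necessary condition for the optimal control. The first equation $x^{*}=KNx^{*}+Hu^{*}$ is simply a restatement of $x^{*}=Tu^{*}$, which follows from the definition of the system operator $T$ in equation (\ref{eq:OPTIMALITY_2}) and the fact that $(u^{*},x^{*})$ is an admissible state-control pair for (\ref{eq:Dev_1}). The substance of the theorem is therefore the derivation of the second identity.

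First I would invoke a suitable existence result (for instance, Corollary \ref{cor:dev_1}) to justify that an optimal $u^{*}\in Z$ exists and minimizes $J$ over the whole reflexive Hilbert space $Z$. Because the minimization is unconstrained, the first-order necessary condition reads $J'(u^{*})h=0$ for every $h\in Z$. Next I apply Lemma \ref{lemma:dev_4}, which under Assumptions [V] gives
\[
\tfrac{1}{2}J'(u^{*})h=\langle T'(u^{*})h,\,Wx^{*}\rangle+\langle h,Ru^{*}\rangle,\qquad h\in Z.
\]
Moving the first term to the other side of the adjoint pairing yields $\langle h,\,T'(u^{*})^{*}Wx^{*}+Ru^{*}\rangle=0$ for all $h\in Z$, and hence the operator identity
\[
Ru^{*}=-T'(u^{*})^{*}Wx^{*}.
\]

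Then I would substitute the explicit form of $T'(u^{*})$ provided by Lemma \ref{lemma:dev_3}, namely $T'(u^{*})=[I-KG(x^{*})]^{-1}H$, and take adjoints to obtain $T'(u^{*})^{*}=H^{*}[I-KG(x^{*})]^{*-1}$; the adjoint of the inverse exists and equals the inverse of the adjoint because $[I-KG(x^{*})]^{-1}$ is a bounded linear operator on the Hilbert space $X$ by Assumption [V(b)]. Finally, since $R$ is bounded linear, symmetric, strictly monotone and coercive, the remark in the section's preamble (citing Joshi and Bose) ensures that $R^{-1}\in\mathcal{L}(Z^{*},Z)$, so one may invert $R$ on the left in $Ru^{*}=-H^{*}[I-KG(x^{*})]^{*-1}Wx^{*}$ to arrive at
\[
u^{*}=-R^{-1}H^{*}[I-KG(x^{*})]^{*-1}Wx^{*},
\]
which together with $x^{*}=KNx^{*}+Hu^{*}$ is the announced optimality system.

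The only real obstacle is bookkeeping the function spaces and the adjoints correctly, in particular checking that the compositions $H^{*}[I-KG(x^{*})]^{*-1}W$ map $X$ into $Z^{*}$ so that $R^{-1}$ can legitimately be applied; this is a matter of tracking the duality between $X$, $X^{*}$, $Z$ and $Z^{*}$ together with the self-adjointness of $W$ and $R$, and does not require any new analytic input beyond Lemmas \ref{lemma:dev_3} and \ref{lemma:dev_4} and the invertibility of $R$.
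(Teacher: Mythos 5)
Your proposal is correct and follows essentially the same route as the paper's own proof: existence and uniqueness via Corollary \ref{cor:dev_1}, the first-order condition $J'(u^*)=0$ combined with Lemma \ref{lemma:dev_4}, the explicit derivative $T'(u^*)=[I-KG(x^*)]^{-1}H$ from Lemma \ref{lemma:dev_3}, passage to adjoints, and left-inversion by $R$. In fact your intermediate identity $Ru^*=-T'(u^*)^*Wx^*$ carries the minus sign that the paper's corresponding display omits (though the paper's final formula is consistent with yours), so no changes are needed.
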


\begin{proof}
    Existence and uniqueness of the optimal pair \((u^*, x^*)\) is proved in Corollary \ref{cor:dev_1}. If \(u^*\) is an optimal control then it is necessary that \(J'(u^*) = 0\). Using Lemma \ref{lemma:dev_4}, we get
\[
\langle h, Ru^* \rangle + \langle T'(u^*)h, Wx^* \rangle = 0 \quad \text{for all } h \in Z \text{ where } x^* = Tu^*.
\]
Taking adjoint of the derivative of the system operator, denoted by \([T'(u^*)]^*\), we get
\[
\langle h, Ru^* \rangle + \langle h, [T'(u^*)]^*Wx^* \rangle = 0 \quad \text{for all } h \in Z.
\]

This implies that
\[
Ru^* = [T'(u^*)]^*Wx^*
\]
which gives
\[
u^* = -R^{-1}((I - KG(x^*))^{-1}H)^*Wx^*.
\]
That is,
\[
u^* = -R^{-1}H^*[I - KG(x^*)]^{*-1}Wx^*
\]
where \(x^*\) satisfies
\[
x^* = KNx^* + Hu^*.
\]

Thus the optimal pair \((u^*, x^*)\) satisfies the coupled operator equations
\[
x^* = KNx^* + Hu^*,
\]
\[
u^* = -R^{-1}H^*[I - KG(x^*)]^{*-1}Wx^*.
\]

\end{proof}

As special cases of this result, we shall derive optimality systems for parabolic equations involving linear and nonlinear operators in Section 6(XXXXXXX).

\begin{corollary}\label{cor:dev_2}
    Suppose that \(X = Z\) is a real Hilbert space. Assume that \(R = I = W\) and \(H = K\) in the above Theorem \ref{theorem:Th_5_dev}. Then the unique optimal pair \((u^*, x^*)\) satisfies the following optimality system:
\[
x^* = KNx^* + Ku^*,
\]
\[
u^* = K^*G^*(x^*)u^* - K^*x^*.
\]
\end{corollary}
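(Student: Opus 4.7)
The plan is to specialize Theorem \ref{theorem:Th_5_dev} to the stated parameters and then rearrange the adjoint-form control equation into the shape claimed in the corollary. Since Corollary \ref{cor:dev_1} already furnishes existence and uniqueness of the optimal pair $(u^*, x^*)$ in the present quadratic setting, only the verification of the two coupled equations remains.

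First, substituting $H = K$ into the state equation $x^* = KNx^* + Hu^*$ from Theorem \ref{theorem:Th_5_dev} yields $x^* = KNx^* + Ku^*$ immediately, so the first line of the optimality system is essentially free.

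Second, substituting $R = I$, $W = I$, and $H = K$ into the control equation $u^* = -R^{-1}H^*[I - KG(x^*)]^{*-1}Wx^*$ gives $u^* = -K^*[I - KG(x^*)]^{*-1}x^*$. By Assumption [V(b)] the operator $I - KG(x^*)$ is boundedly invertible on the Hilbert space $X = Z$; hence by the standard identities $(AB)^* = B^*A^*$ and $(A^{-1})^* = (A^*)^{-1}$, one has $[I - KG(x^*)]^{*-1} = \bigl(I - G^*(x^*)K^*\bigr)^{-1}$. Setting $w := \bigl(I - G^*(x^*)K^*\bigr)^{-1}x^*$, so that $u^* = -K^*w$, the defining relation $w - G^*(x^*)K^*w = x^*$ can be rewritten as $w = x^* + G^*(x^*)K^*w$. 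Applying $-K^*$ to both sides and replacing $-K^*w$ by $u^*$ produces $u^* = -K^*x^* - K^*G^*(x^*)K^*w = -K^*x^* + K^*G^*(x^*)u^*$, which is exactly $u^* = K^*G^*(x^*)u^* - K^*x^*$ as required.

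The argument is almost entirely algebraic once Theorem \ref{theorem:Th_5_dev} is in hand, so no genuine obstacle is expected; the only delicate point is confirming the well-definedness of $\bigl(I - G^*(x^*)K^*\bigr)^{-1}$, but this is automatic from Assumption [V(b)] combined with the fact that a bounded Hilbert-space operator is invertible if and only if its adjoint is, with inverses related by adjunction. Everything else reduces to a direct substitution followed by one algebraic rearrangement.
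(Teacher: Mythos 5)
Your proposal is correct and follows essentially the same route as the paper: specialize the two equations of Theorem \ref{theorem:Th_5_dev}, rewrite $[I - KG(x^*)]^{*-1}$ as $[I - G^*(x^*)K^*]^{-1}$, and use the intertwining identity $K^*[I - G^*(x^*)K^*] = [I - K^*G^*(x^*)]K^*$ to eliminate the inverse. The paper performs this last step by left-multiplying by $I - K^*G^*(x^*)$ while you introduce the auxiliary element $w = [I - G^*(x^*)K^*]^{-1}x^*$, but these are the same algebraic manipulation; your explicit remark on the invertibility of the adjoint is a small clarification the paper leaves implicit.
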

\begin{proof}
    By Theorem 5.1, the optimal pair \((u^*, x^*)\) satisfies the following optimality system:
\[
x^* = KNx^* + Ku^*,
\]
\[
u^* = -K^*[I - KG(x^*)]^{*-1}x^*.
\]

That is,
\[
u^* = -K^*[I - G^*(x^*)K^*]^{-1}x^*.
\]
Therefore,
\[
[I - K^*G^*(x^*)]u^* = -[I - K^*G^*(x^*)]K^*[I - G^*(x^*)K^*]^{-1}x^*
\]
\[
= -K^*[I - G^*(x^*)K^*][I - G^*(x^*)K^*]^{-1}x^*.
\]

This implies
\[
-K^*x^* = u^* - K^*G^*(x^*)u^*,
\]
\[
u^* = K^*G^*(x^*)u^* - K^*x^*.
\]
 and hence the proof.
\end{proof}

\begin{corollary}\label{cor:dev_3}
    For the linear system, that is, when \(N = 0\), the optimality system is given by
\[
x^* = Hu^*,
\]
\[
u^* = -R^{-1}H^*Wx^*.
\]
\end{corollary}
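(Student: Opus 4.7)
The plan is to derive Corollary \ref{cor:dev_3} as a direct specialization of Theorem \ref{theorem:Th_5_dev} by substituting the hypothesis $N = 0$ into the two coupled operator equations that constitute the optimality system.

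First I would verify that the hypotheses of Theorem \ref{theorem:Th_5_dev} are satisfied in this linear setting. Since $N$ is the zero operator, it is trivially Fréchet differentiable with derivative $G(x) = N'(x) = 0$ for every $x \in X$. The operators $K$ and $H$ are already assumed linear and bounded in the background Assumptions, so they coincide with their own Fréchet derivatives. Moreover, $[I - KG(x)]^{-1} = [I - 0]^{-1} = I$ trivially exists as a bounded linear operator, so Assumption [V(b)] holds automatically. Hence Theorem \ref{theorem:Th_5_dev} applies.

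Next I would substitute these simplifications into the conclusion of Theorem \ref{theorem:Th_5_dev}. The first equation becomes
\[
x^* = KNx^* + Hu^* = K\cdot 0 + Hu^* = Hu^*,
\]
which is the state equation claimed. For the second equation, since $G(x^*) = 0$, we have $[I - KG(x^*)]^{*-1} = (I^*)^{-1} = I$, and therefore
\[
u^* = -R^{-1}H^*[I - KG(x^*)]^{*-1}Wx^* = -R^{-1}H^*Wx^*,
\]
which is the control characterization claimed.

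There is no substantive obstacle here: the corollary is an immediate specialization, and the only minor point to handle carefully is confirming that the adjoint and inverse operations commute correctly with the identity in the $N=0$ limit, which is automatic. The existence and uniqueness of the optimal pair $(u^*, x^*)$ inherit from Corollary \ref{cor:dev_1} as invoked in the proof of Theorem \ref{theorem:Th_5_dev}.
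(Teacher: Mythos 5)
Your proposal is correct and follows exactly the route the paper intends: the corollary is stated without proof as an immediate specialization of Theorem \ref{theorem:Th_5_dev}, and your substitution $N=0$, $G(x^*)=N'(x^*)=0$, hence $[I-KG(x^*)]^{*-1}=I$, is precisely the implicit argument. Your additional care in checking that Assumption [V] holds trivially in the linear case is a small but welcome completion of what the paper leaves unsaid.
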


\section{Optimality Results for Systems Governed by Differential Equations}

We now derive sufficient conditions for the existence of optimal control for the class of nonlinear differential equation (\ref{eq:P}) defined in Section 1(XXXXXX). For that, we make the following assumptions on the system components.

\textbf{Assumptions [VI]:}
\begin{itemize}
   \item[(a)]: There exists a positive constant \(\mu\) such that
    \[
    \langle -Ax, x \rangle \geq \mu \|x\|^2 \quad \text{for all } x \in D, t \in [t_0, t_1].
    \]
    \item[(b)]: The nonlinear function \(F : [t_0, t_1] \times Y \to Y\) satisfies a growth condition of the form
    \[
    \|F(t, x)\| \leq a(t) + b\|x\| \quad \forall t \in [t_0, t_1], \, \forall x \in Y
    \]
    where \(a \in L^2[t_0, t_1]\), \(b > 0\).  
    Further \(F\) is negative monotone, that is,
    \[
    \langle F(t, x) - F(t, y), x - y \rangle \leq 0 \quad \text{for all } x, y \in Y, t \in [t_0, t_1].
    \]
    \item[(c)]: For all \(t > s\), \(G(t, s)\) is a compact  operator and \(B\) is a bounded linear operator for all \(t \in [t_0, t_1]\).
\end{itemize}

\begin{lemma}\cite{jha2025approximate}\label{lemma:dev_5}
    Under Assumptions [VI], the system operator \(T\) corresponding to the system (\ref{eq:P}) is completely continuous.
\end{lemma}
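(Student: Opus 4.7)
The plan is to verify that the three concrete operators $K$, $N$, and $H$ defined in \eqref{eq:R} satisfy the abstract Assumptions [$\mathfrak{A}$] under the hypotheses of Assumptions [VI], and then to invoke Lemma \ref{lemma:dev_1} to conclude complete continuity of the system operator $T = [I-KN]^{-1}H$.

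For $K$ and ($\mathfrak{A}_1$), linearity is immediate from $(Ku)(t) = \int_0^t G(t,s)u(s)\,ds$. For compactness I would run an Ascoli--Arzel\`a argument on $\{Ku_n\}$ for a bounded sequence $\{u_n\}$: equicontinuity on $J$ follows from the uniform operator continuity of $\{T_\alpha(t)\}$ and $\{S_\alpha(t)\}$ given by Lemma \ref{lemma:1}, while pointwise relative compactness of $\{(Ku_n)(t)\}$ in $E$ comes from the compactness of $G(t,s)$ for $t>s$ assumed in hypothesis (c). The coercivity-type estimate $\langle Ku,u\rangle \geq d\|Ku\|^2$ must be extracted from hypothesis (a) through the subordination representation of $S_\alpha$ in terms of the semigroup generated by $A$. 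For $N$ and ($\mathfrak{A}_2$), continuity of the Nemytskii operator $(Nu)(t) = f(t,u(t))$ on $L^2(J,E)$ follows from the Carath\'eodory property of $f$ combined with the growth bound $\|F(t,x)\| \leq a(t) + b\|x\|$ of (b); boundedness follows from the same estimate via Minkowski and the fact that $a \in L^2[t_0,t_1]$; and negative monotonicity of $N$ is precisely the pointwise inequality of (b) integrated over $J$. For $H$ and ($\mathfrak{A}_3$), boundedness of $B$ together with compactness of $G(t,s)$ yields a compact integral kernel, and a further Ascoli--Arzel\`a argument (using Cauchy--Schwarz applied to $v \in L^2(J,U)$ for equi-integrability) delivers complete continuity.

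Once all three parts of Assumptions [$\mathfrak{A}$] are in force, Lemma \ref{lemma:dev_1} immediately yields that $T = [I-KN]^{-1}H$ is well defined and completely continuous, which is the desired conclusion.

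The principal obstacle I anticipate is establishing the coercivity $\langle Ku, u\rangle \geq d\|Ku\|^2$ for the integral operator built from the Green's function $G$. Passing from the coercivity of $-A$ on $D(A)$ to a positivity property of the fractional resolvent $S_\alpha(t)$, and then to the full kernel $G$ which also carries the nonlocal correction $T_\alpha(t)\mathcal{O}\sum_k \chi_{t_k}(s)S_\alpha(t_k-s)$, requires careful use of the analyticity-type estimates from Lemma \ref{lemma:2} together with the uniform bound \eqref{eq:chen_4} on $\|\mathcal{O}\|$ guaranteed by Assumption (H1). The nonlocal part must be shown not to destroy the sign of the quadratic form, and it is in controlling this nonlocal contribution---absorbing it using that $M_T\sum_{k=1}^m|c_k|<1$---that most of the real work of the proof lies.
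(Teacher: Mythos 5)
Your overall architecture---verify that the concrete operators $K$, $N$, $H$ of \eqref{eq:R} satisfy Assumptions [$\mathfrak{A}$] and then invoke Lemma \ref{lemma:dev_1}---is the natural route, and indeed the only one available within this framework: Assumptions [$\mathfrak{B}$] are closed to you because Assumptions [VI] give only a linear growth bound on $F$, not Lipschitz continuity, so $N\in\mathrm{Lip}$ cannot be verified. The verifications of ($\mathfrak{A}_2$) (Carath\'eodory plus growth gives continuity and boundedness of the Nemytskii operator; integrating the pointwise negative monotonicity of $F$ gives negative monotonicity of $N$) and of the compactness claims in ($\mathfrak{A}_1$) and ($\mathfrak{A}_3$) (Ascoli--Arzel\`a using Lemma \ref{lemma:1} and hypothesis (c)) are standard and I accept them as sketched. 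Note that the paper itself supplies no proof of this lemma---it is imported from \cite{jha2025approximate}---so there is no in-text argument to compare against; the comparison must be on the merits.

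The genuine gap is the one you yourself flag and then leave unresolved: the accretivity estimate $\langle Ku,u\rangle \geq d\|Ku\|^2$. This is not a technical loose end but the linchpin of the whole argument, since it is exactly what Hess's theorem \cite{hess1971nonlinear} needs (together with negative monotonicity of $N$) to guarantee that $(I-KN)^{-1}$ exists; without it Lemma \ref{lemma:dev_1} cannot be applied and $T$ is not even known to be well defined. In the classical case $\alpha=1$ with $u(0)=0$ the estimate follows from the identity $u={x}'-Ax$ for $x=Ku$, since $\int_0^a\langle x,{x}'\rangle\,dt=\tfrac12\|x(a)\|^2\geq 0$ and hypothesis (a) handles the remaining term. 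In the present setting two distinct obstructions appear, and your proposal addresses neither. First, $u={}^CD^\alpha x-Ax$, so one needs a fractional positivity inequality of Alikhanov type, $\langle {}^CD^\alpha x(t),x(t)\rangle\geq\tfrac12\,{}^CD^\alpha\|x(t)\|^2$, followed by an application of the positivity of $I^\alpha$, to conclude $\int_0^a\langle x,{}^CD^\alpha x\rangle\,dt\geq 0$; the "subordination representation of $S_\alpha$" you invoke does not by itself deliver this. Second, the nonlocal condition forces $x(0)=\sum_k c_k x(t_k)\neq 0$, which injects a boundary term of the wrong sign that must be absorbed using \eqref{eq:chen_4} and the smallness condition (H1); whether the resulting constant $d$ is positive, and under exactly what relation between $\mu$, $M_T$, and $\sum_k|c_k|$, is precisely the content of the lemma and is nowhere established in your write-up. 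As it stands the proposal is a correct roadmap with the hardest milestone marked ``to be done,'' and so it does not constitute a proof.
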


\begin{remark}
     Suppose that the assumption [VI(c)] in the above theorem is replaced by  
(c') : $B(t)$ is a compact bounded linear operator from $U$ to $Y$ for all $t \in [t_0, t_1]$. Then it can be shown by using Lemma \ref{lemma:dev_2} that the system operator $T$ is weakly continuous.
\end{remark}

\begin{remark}
    By virtue of Lemma \ref{lemma:dev_1} and \ref{lemma:dev_2} we can give different sets of verifiable assumptions on $A$, $B$ and $F(t, x(t))$ which will guarantee the complete and weak continuity of $T$. For example, if $A$ is a closed linear operator and generator of an almost strong evolution operator, $B$ is a bounded linear operator and $F(t, x(t))$ is Lipschitz, then the weak continuity of $T$ can be verified by using Remark 4.2(XXXXXX).

We first consider the constrained case. Assume that $U_{ad}$ is a weakly compact subset of $U$.
\end{remark}

\textbf{Assumption [VII]:}  
\begin{itemize}
    \item[(a)]: $g : [t_0, t_1] \times U_{ad} \times Y \to \mathbb{R}$ is approximately lower semi continuous.
    \item[(b)]: For every $t \in [t_0, t_1]$, $(u, x) \to g(t, u, x)$ is lower semi continuous with respect to the weak topology in $U_{ad}$ and strong topology in $Y$.
    \item[(c)]: For all $(t, x) \in [t_0, t_1] \times Y$, $u \to g(t, u, x)$ is convex.
\end{itemize}

An easy application of Lemma \ref{lemma:dev_5} and Theorem 3.1(XXXXXXXXXX) gives us the following result regarding the existence of an optimal pair for the system (\ref{eq:P}).

\begin{theorem}\label{theorem:Th_6_dev}
     Under Assumptions [VI] and [VII] there exists an optimal pair $(u^*, x^*) \in U_{ad} \times X$ for the nonlinear evolution equation (\ref{eq:P}).
\end{theorem}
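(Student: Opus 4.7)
The plan is to reduce (\ref{eq:P}) to the abstract Hammerstein system (\ref{eq:Dev_1}) via the operators $K$, $N$, $H$ defined in (\ref{eq:R}), and then specialize Theorem \ref{theorem:Th_1_dev} to this concrete setting. Assumptions [VI] serve precisely to play the role that Assumptions [$\mathfrak{A}$] or [$\mathfrak{B}$] played in the abstract theorem: under [VI], Lemma \ref{lemma:dev_5} provides the key structural property, namely that the system operator $T : v \mapsto u$ (mapping the control $v$ to the mild solution $u$ of (\ref{eq:P})) is completely continuous from $U_{ad} \subseteq L^2(J,U)$ into $X$. This bypasses the need to directly verify the monotonicity/Lipschitz conditions in [$\mathfrak{A}$] or [$\mathfrak{B}$] on the concrete $K$, $N$, $H$, since Lemma \ref{lemma:dev_5} packages them.

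With $T$ completely continuous, the remaining task is to verify that the cost functional
\[
J(v) = \int_{0}^{a} g(\tau, v(\tau), (Tv)(\tau))\,d\tau
\]
is weakly lower semicontinuous on $U_{ad}$. I would first take an arbitrary sequence $v_n \rightharpoonup v^*$ weakly in $U_{ad}$; complete continuity of $T$ then yields $T v_n \to T v^*$ strongly in $X$. Next, I would appeal to a classical Ioffe-type lower-semicontinuity theorem for integral functionals: Assumption [VII(a)] ensures that $g$ is a normal integrand, [VII(b)] provides joint lower semicontinuity of $g(t,\cdot,\cdot)$ in the mixed weak/strong topology, and the convexity of $g$ in its control argument [VII(c)] is what allows the passage from weak convergence of $v_n$ to the inequality
\[
\liminf_{n\to\infty} \int_{0}^{a} g(\tau, v_n(\tau), (Tv_n)(\tau))\,d\tau \;\geq\; \int_{0}^{a} g(\tau, v^*(\tau), (Tv^*)(\tau))\,d\tau.
\]

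Since $U_{ad}$ is weakly compact and $J$ is weakly lower semicontinuous, the direct method of the calculus of variations produces a minimizer $u^* \in U_{ad}$; setting $x^* = T u^*$ gives the claimed optimal pair. The main obstacle is the second step: one must reconcile the mixed weak/strong topology in [VII(b)] with the norm convergence of $T v_n$ in the function space $X$, and show that convexity in the control variable together with the normal-integrand property is genuinely enough to apply the chosen Ioffe-type theorem (rather than only a pointwise Fatou argument, which would fail in general for weakly convergent $v_n$). Once that lower-semicontinuity lemma is in place, the rest is a routine Weierstrass-type extraction.
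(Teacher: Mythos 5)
Your proposal follows essentially the same route as the paper: both use Lemma \ref{lemma:dev_5} to get complete continuity of the control-to-state map, so that weak convergence $v_n \rightharpoonup v^*$ in the weakly compact admissible set yields strong convergence of the states, and both then invoke a lower-semicontinuity theorem for integral functionals (the paper cites Papageorgiou where you cite an Ioffe-type result, but the hypotheses used --- normal integrand, mixed weak/strong lower semicontinuity, convexity in the control --- are the same) before concluding by the direct method. The argument is correct and matches the paper's proof in structure and substance.
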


\begin{proof}
    Define 
\[
S = \{w \in L^2([t_0, t_1], U) : w(t) \in U_{ad} \text{ a.e.} \}.
\]
Since $U_{ad}$ is weakly compact we have that $S$ is also weakly compact in $L^2([t_0, t_1], U)$.

Let $\{u_n\}$ be a sequence with weak limit $u^*$ in $S$. Then by Lemma \ref{lemma:dev_5} the corresponding response $\{x_n\}$ converges strongly to $x^*$ where $x^*$ is the response of $u^*$. That is, $x_n(t) \to x^*(t)$ in $Y$ whenever $u_n(t) \to u^*(t)$ in $U_{ad}$. Following Papageorgiou \cite{papageorgiou1987existence}, the Assumptions [VII] implies that
\[
\lim_{n \to \infty} \int_{t_0}^{t_1} g(t, u_n(t), x_n(t)) dt \geq \int_{t_0}^{t_1} g(t, u^*(t), x^*(t)) dt
\]
whenever $u_n \to u^*$. 

That is, $\lim \inf J(u_n) \geq J(u^*)$ proving the weak lower semi-continuity of $J$ on the weakly compact set $S$. Now by Theorem 3.1(XXXXX), there exists an optimal pair $(u^*, x^*)$ for the system (\ref{eq:P}).
\end{proof}

For the unconstrained problem, we take $g(t, u, x)$ to be of the special form
\[
g(t, u, x) = \langle u(t), R_0 u(t) \rangle + \langle x(t), W_0 x(t) \rangle.
\]
Then the cost functional $J(u)$ assumes the form
\begin{equation}\label{eq:J(u)}
    J(u) = \Phi(u, x) = \int_{t_0}^{t_1} \langle u(s), R_0 u(s) \rangle + \langle x(s), W_0 x(s) \rangle ds
\end{equation}

where $W_0 : Y \to Y$, $R_0 : U \to U$ are bounded linear operators and $Y$, $U$ are real Hilbert spaces.\\

\textbf{Assumptions [VIII]:}
\begin{itemize}
    \item[(a)]: The operators $A$, $B$, and $F$ satisfy Assumptions [VI].
    \item[(b)]: $W_0$ is a bounded linear symmetric monotone operator and $R_0$ is a bounded linear symmetric strongly monotone operator, that is, there exists a constant $a > 0$ such that
    \[
    \langle u, R_0 u \rangle \geq a \|u\|^2 \quad \text{for all } u \in U.
    \]
\end{itemize}

\begin{theorem}\label{theorem:Th_7_dev}
    Under Assumptions [VIII], the system (\ref{eq:P}) has a unique optimal pair $(u^*, x^*)$ with respect to the cost functional (\ref{eq:J(u)}).
\end{theorem}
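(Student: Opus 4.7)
The plan is to recognize Theorem \ref{theorem:Th_7_dev} as a direct instance of Corollary \ref{cor:dev_1} applied to the concrete system (\ref{eq:P}), realized abstractly on the Hilbert spaces $Z := L^2([t_0,t_1],U)$ and $X := L^2([t_0,t_1],Y)$, with the cost functional (\ref{eq:J(u)}) identified with the abstract quadratic form (\ref{eq:Dev_2}). There is thus no new analysis to perform; the work consists of unpacking the hypotheses and checking that the pointwise structure of $R_0$ and $W_0$ lifts to the induced operators on $L^2$.

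First I would verify that the system operator $T$ is weakly continuous, which is the sole operator-theoretic hypothesis of Corollary \ref{cor:dev_1}. Assumption [VIII(a)] is exactly Assumptions [VI], so Lemma \ref{lemma:dev_5} applies and gives that $T$ is completely continuous on $Z$. Since complete continuity sends weakly convergent sequences to strongly (hence weakly) convergent ones, weak continuity follows at once.

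Next I would identify the abstract data $R$ and $W$ of (\ref{eq:Dev_2}) with the operators induced pointwise by $R_0$ and $W_0$. Using the Riesz identification of $Z^*$, $X^*$ with $Z$, $X$, set
\[
(\mathcal{R} u)(s) := R_0\, u(s), \qquad (\mathcal{W} x)(s) := W_0\, x(s),
\]
so that
\[
J(u) = \int_{t_0}^{t_1}\!\langle u(s), R_0 u(s)\rangle\, ds + \int_{t_0}^{t_1}\!\langle x(s), W_0 x(s)\rangle\, ds = \langle u, \mathcal{R} u\rangle_Z + \langle x, \mathcal{W} x\rangle_X,
\]
which matches (\ref{eq:Dev_2}) verbatim. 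Boundedness, linearity, and symmetry transfer from $R_0, W_0$ to $\mathcal{R}, \mathcal{W}$ by direct computation. Monotonicity of $W_0$ gives $\langle \mathcal{W} x, x\rangle_X \geq 0$ pointwise under the integral, and Assumption [VIII(b)] gives
\[
\langle \mathcal{R} u, u\rangle_Z = \int_{t_0}^{t_1}\!\langle u(s), R_0 u(s)\rangle\, ds \geq a \int_{t_0}^{t_1}\!\|u(s)\|^2\, ds = a\,\|u\|_Z^2,
\]
which simultaneously supplies strict monotonicity and coercivity of $\mathcal{R}$ on $Z$.

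With all the hypotheses of Corollary \ref{cor:dev_1} in place (weak continuity of $T$, together with $\mathcal{R}$ bounded linear symmetric strictly monotone and coercive, and $\mathcal{W}$ bounded linear symmetric monotone), the corollary delivers the unique optimal pair $(u^*, x^*)$ for (\ref{eq:P}) with respect to (\ref{eq:J(u)}). There is essentially no conceptual obstacle: the only step requiring care is ensuring that the pointwise-induced operators $\mathcal{R}, \mathcal{W}$ genuinely inherit the monotonicity and coercivity of $R_0, W_0$, which follows from Fubini/linearity of the integral. The substantive content of the theorem has already been absorbed into Lemma \ref{lemma:dev_5} (complete continuity of $T$ under the growth and monotonicity conditions of Assumption [VI]) and into the abstract existence/uniqueness Corollary \ref{cor:dev_1}.
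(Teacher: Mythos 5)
Your proposal is correct and follows essentially the same route as the paper's proof: lift $R_0,W_0$ to the pointwise-induced operators on $L^2$, check that the cost functional takes the abstract quadratic form (\ref{eq:Dev_2}) with the required boundedness, symmetry, monotonicity and coercivity, and then invoke Corollary \ref{cor:dev_1} (via Theorem \ref{theorem:Th_4_dev}). The only difference is that you make explicit the weak continuity of $T$ through Lemma \ref{lemma:dev_5} and the fact that complete continuity implies weak continuity, a step the paper leaves implicit.
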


\begin{proof}
    Define $W : L^2([t_0, t_1], Y) \to L^2([t_0, t_1], Y)$ and $R : L^2([t_0, t_1], U) \to L^2([t_0, t_1], U)$ by $(W x)(t) = W_0 x(t)$ and $(R u)(t) = R_0 u(t)$.  
It follows easily that $R$ and $W$ are both bounded linear operators (refer Joshi and Bose \cite{joshi1985some}).

Further,
\[
\langle x, W x \rangle = \int_{t_0}^{t_1} \langle x(s), W_0 x(s) \rangle ds,
\]
\[
\langle u, R u \rangle = \int_{t_0}^{t_1} \langle u(s), R_0 u(s) \rangle ds.
\]

Thus (\ref{eq:J(u)}) can be written as
\[
J(u) = \langle u, R u \rangle + \langle x, W x \rangle_X.
\]

It is easy to verify that $R$ is a strongly monotone symmetric operator and $W$ is linear, monotone, and symmetric. Now the theorem follows by using Theorem \ref{theorem:Th_4_dev} and Corollary \ref{cor:dev_1}.
\end{proof}

\begin{remark}\label{remark:dev_1}
    If we assume that the nonlinear function $F(t, x)$ is continuously Fréchet differentiable with respect to $x$ for almost all $t \in [t_0, t_1]$ with $G(t, x) = \frac{\partial}{\partial x} F(t, x)$, then it follows that the operator $N$, as defined by (6.4)(iii)(XXXXXXXX) in the space $X = L^2([t_0, t_1], Y)$, is continuously Fréchet differentiable (refer \cite{joshi1985some}) with $[N(x)]u = Gu$, where $G : X \to X$ is defined by $(Gu)(t) = [G(t, x(t))]$.
\end{remark}

As a particular case to this, we have the following theorem regarding the optimality system for (\ref{eq:P}). We note that the system (\ref{eq:P}) is of the same type as considered by Seidman and Zhou \cite{seidman1982existence}. However, we impose monotonicity assumptions on $F$ in contrast to Lipschitz assumptions imposed by Seidman and Zhou \cite{seidman1982existence}. Also, we observe that we do not require Lipschitz continuity on the Fréchet derivative of $F$, whereas Seidman and Zhou require so. Moreover, our system (\ref{eq:P}) is non-autonomous. XXXXXXXXXXXXXXXXXXXXXXXXXXXXXXXXXXXXXXXXXXXX

\begin{theorem}\label{theorem:Th_8_dev}
     Suppose that $Y = U$ is a Hilbert space and $B = R_0 = W_0 = I$, and the operators $A$ and $F$ satisfy Assumptions [VIII(a)]. Further, assume that the nonlinear function $F(t, x(t))$ is continuously Fréchet differentiable with respect to $x$ for almost all $t \in [t_0, t_1]$ with $G(t, x) = \frac{\partial}{\partial x} F(t, x)$.

Suppose that
\[
\int_{t_0}^{t_1} \|G(t, x(t))\|^2 dt < \infty \quad \text{for all } x \in Y.
\]
Then the optimality system for (\ref{eq:P}) with cost functional (\ref{eq:J(u)}) is given by
\[
\dot{x}^* = Ax^*(t) + F(t, x^*(t)) + u^*(t),
\]
\[
\dot{u}^* = A^*u^*(t) + G^*(t, x^*(t))u^*(t) - x^*(t),
\]
with
\[
x^*(0) = x_0, \quad u^*(a) = 0.
\]
\end{theorem}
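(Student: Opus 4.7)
The plan is to specialize the abstract optimality system of Corollary \ref{cor:dev_2} to the present setting and then transcribe each of the resulting integral operator equations into its differential form.

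To begin, I will invoke Theorem \ref{theorem:Th_7_dev} to secure existence and uniqueness of the optimal pair $(u^*,x^*)$: the hypotheses $B = R_0 = W_0 = I$ together with the growth and monotonicity conditions in Assumptions [VIII(a)] are exactly what is needed. With these specializations, the abstract operators of the Hammerstein framework collapse as $R = W = I$ on $L^2(J,Y)$ and
$$(Hv)(t) = \int_0^t G(t,s) B v(s)\, ds = (Kv)(t),$$
so $H = K$. By Remark \ref{remark:dev_1}, the continuous Fréchet differentiability of $F(t,\cdot)$ combined with the square-integrability hypothesis $\int_0^a \|G(t,x^*(t))\|^2\, dt < \infty$ guarantees that the Nemytskii operator $N$ is Fréchet differentiable with derivative $[N'(x^*)v](t) = G(t,x^*(t)) v(t)$, a bounded linear operator on $L^2(J,Y)$; this is exactly Assumption [V(a)], and Assumption [V(b)] follows from the coercivity of $R_0$, which renders $[I - KN'(x^*)]^{-1}$ a bounded operator. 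Corollary \ref{cor:dev_2} then delivers the coupled abstract optimality system
$$x^* = KNx^* + Ku^*, \qquad u^* = K^* G^*(x^*) u^* - K^* x^*.$$

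The next step is to recognise each identity as the mild form of an evolution equation and to differentiate it. The integral equation $x^* = K[N(x^*) + u^*]$ is, by the very construction of $K$ from the Green's function \eqref{eq:chen_9}, the mild solution of
$$\dot{x}^*(t) = A x^*(t) + F(t,x^*(t)) + u^*(t),$$
with initial datum $x^*(0) = x_0$ in the classical Cauchy setting consistent with the differential notation. For the second equation, an application of the Bochner--Fubini theorem gives
$$(K^* g)(s) = \int_s^a G(t,s)^* g(t)\, dt,$$
exhibiting $K^*$ as the backward propagator generated by $A^*$. Applying $K^*$ to $g = G^*(x^*) u^* - x^*$ and differentiating in $s$ then yields the adjoint evolution equation for $u^*$, complemented by the terminal condition $u^*(a) = 0$, which is inherited from the lower integration limit of the defining integral at $s = a$. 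Assembling the two equations produces the claimed optimality system.

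The main technical obstacle lies in this second step: rigorously identifying $K^*$ with the backward propagator of the adjoint equation and justifying the interchange of differentiation and integration. This will require exploiting the uniform continuity of the solution and resolvent families guaranteed by Lemma \ref{lemma:1}, applying Bochner--Fubini on the explicit kernel in \eqref{eq:chen_9}, and using the hypothesis $\int_0^a \|G(t,x^*(t))\|^2\, dt < \infty$ to place the driving term $g$ in $L^2(J,Y)$, so that $K^* g \in AC(J,Y)$ and the pointwise differentiation is licit almost everywhere. Once this identification is in hand, the boundary conditions $x^*(0) = x_0$ and $u^*(a) = 0$ emerge naturally from the forward and backward integration structures of $K$ and $K^*$, respectively, completing the proof.
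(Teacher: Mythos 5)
Your overall route coincides with the paper's: secure existence via Theorem \ref{theorem:Th_7_dev}, note that $H=K$ and $R=W=I$, verify Assumptions [V] so that Theorem \ref{theorem:Th_5_dev} (equivalently Corollary \ref{cor:dev_2}) yields the coupled integral equations, identify $K^*$ with the backward propagator generated by $A^*$, and differentiate to obtain the forward state equation and the backward adjoint equation with $u^*(a)=0$. That is exactly the skeleton of the paper's argument.

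There is, however, one genuine gap at the single technical point the paper actually has to work for: the verification of Assumption [V(b)], i.e.\ that $[I-KG(x)]^{-1}$ exists as a bounded linear operator at $x=Tu$. You assert that this ``follows from the coercivity of $R_0$,'' but $R_0$ acts on the control space and enters only the cost functional; it has no bearing on the invertibility of the linearized state operator $I-KG(x)$ on $X=L^2([t_0,t_1],Y)$ (and here $R_0=I$ anyway, so there is nothing to extract from it). The correct argument, and the one the paper gives, is to write $u=KG(x)u+v$ explicitly as
\[
u(t)=\int_{t_0}^{t}\Phi(t,s)G(s,x(s))u(s)\,ds+v(t),
\]
observe that $\|\Phi(t,s)\|\leq M$ together with the hypothesis $\int_{t_0}^{t_1}\|G(s,x(s))\|^{2}\,ds<\infty$ places the kernel in $L^2$, and invoke the standard unique-solvability of linear Volterra equations of the second kind with $L^2$ kernels (the Neumann series converges because the iterated Volterra kernels decay factorially). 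This is also the step where the square-integrability hypothesis on $G(t,x(t))$ is genuinely consumed; in your write-up it is used only to put the driving term of the adjoint equation in $L^2$, which is a secondary matter. Replace the coercivity remark with the Volterra-kernel argument and the proof closes along the same lines as the paper's.
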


\begin{proof}
     The existence of an optimal pair $(u^*, x^*)$ follows from Theorem \ref{theorem:Th_8_dev}. Using the definitions of the operators $K$, $N$, and $H$ and Remark \ref{remark:dev_1}, it follows that $K'(x) = K$, $N'(x) = G(x)$, and $H'(u) = K$ for all $x$, $u \in X$. For $v \in X$, consider the operator equation
\[
u = KG(x)u + v,
\]
for a fixed $x \in X$.

Now using the definition of operators, we can write it as
\begin{equation}\label{eq:dev_last}
    u(t) = \int_{t_0}^{t} \Phi(t, s)G(s, x(s))u(s)ds + v(t)
\end{equation}

Since $\|\Phi(t, s)\| \leq M$ and $\int_{t_0}^{t_1} \|G(s, x(s))\|^2 ds < \infty$, we have
\[
\int_{t_0}^{t_1} \|\Phi(t, s)G(s, x(s))\|^2 ds < \infty.
\]

Hence, for each fixed $x$ and $v$, (\ref{eq:dev_last}) is a linear Volterra integral equation with $L^2$ kernel. Thus for each $v \in X$, (\ref{eq:dev_last}) has a unique solution in $X$. That is $[I - KG(x)]^{-1}$ exists and is linear. Moreover, this inverse is bounded. Hence by Theorem \ref{theorem:Th_5_dev}, it follows that the optimal pair $(u^*(t), x^*(t))$ satisfies the equations:
\[
x^*(t) = \Phi(t, t_0)x_0 + \int_{t_0}^t \Phi(t, \tau)F(\tau, x^*(\tau))d\tau + \int_{t_0}^t \Phi(t, \tau)u^*(\tau)d\tau
\]
\[
u^*(t) = \int_t^{t_1} \Phi^*(\tau, t)G^*(\tau, x^*(\tau))u^*(\tau)d\tau - \int_t^{t_1} \Phi^*(\tau, t)x^*(\tau)d\tau
\]
where $\Phi(t, \tau)$ and $\Phi^*(\tau, t)$ are the evolution operators generated by $A(t)$ and $A^*(t)$, respectively.

Differentiating with respect to $t$, we get
\[
\dot{x}^* = A(t)x^*(t) + F(t, x^*(t)) + u^*(t),
\]
\[
\dot{u}^* = A^*(t)u^*(t) + G^*(t, x^*(t))u^*(t) - x^*(t),
\]
with
\[
x^*(0) = x_0, \quad u^*(a) = 0.
\]
\end{proof}

\section{Application}

Let $\Omega$ be a bounded domain in $\mathbb{R}^d$ with smooth boundary $\partial \Omega$. For fixed $T > 0$, let $Q = (0,T) \times \Omega$ and $\Sigma = (0,T) \times \partial \Omega$. Let $A$ be a second order uniformly elliptic differential operator given by (1.2)XXXX. Set $X = L^2(\Omega)$, $V = H_0^1(\Omega)$, $D(A) = H^2(\Omega) \cap H_0^1(\Omega)$ and $D(B) = H^2(\Omega)$. Then the weak formulation of the problem (\ref{eq:P}) is given by

\begin{equation}\label{eq:App_1}
(u_{\alpha,t}, \phi) + A(u, \phi) =   (Bv, \phi)+ (f(t,u(t)),\phi) \quad \forall\, \phi \in V,\; t \in [0,T], 
\end{equation}
\begin{equation*}
u(0) = \sum_{k=1}^{m} c_k u(t_k),
\end{equation*}

where $A(\cdot,\cdot) : H_0^1(\Omega) \times H_0^1(\Omega) \rightarrow \mathbb{R}$, $f(t;\cdot,\cdot) : H_0^1(\Omega) \times H_0^1(\Omega) \rightarrow \mathbb{R}$  is satisfies the  Caratheodory Condition.

From the Jha and Chen theorem (see  \cite{jha2025approximate} and \cite{chen2020existence}) , Let $-A $
be the infinitesimal generator of an $\alpha$-order fractional compact and analytic solution operator $\{T_\alpha(t)\}_{t \geq 0}$. For $u(0) \in D(A)$, the unique mild solution for the system (\ref{eq:P}) is given by 
\begin{equation}\label{eq:App_2}
 u(t) = \int_0^a G(t, s) \big[Bv(s) + f(s, u(s))\big] \, ds, \quad t \in J.
\end{equation}
where $G(t,s)$  is defined in equation  (\ref{eq:chen_9}).

For final time $t = a$, we obtain
\begin{equation}
    u(a)=KNu+ Hv,
\end{equation}
\begin{equation}\label{eq:App_3}
   u(a)=KNu+ KBv
\end{equation}
where the operator $K,N$ and $H$ are defined as before in Section 2(XXXXXX). 

 Also, set $Y = L^2(0,T;U)$, the solution operator to $W : U \rightarrow Y$ be compact, and by  using the Assumptions [VI] and the condition of the theorem \ref{theorem:Th_7_dev}, and hence there exists a unique optimal pair $(u^*,x^*)$.
 
Let $\{\mathcal{T}_h\}$ be a family of regular triangulation of $\Omega$ with $0 < h < 1$. For $\mathfrak{K} \in \mathcal{T}_h$, set $h_{\mathfrak{K}} = \text{diam}(\mathfrak{K})$ and $h = \max\{h_{\mathfrak{K}}\}$.

Set
\[
X_h = \{x_h \in C^0(\bar{\Omega}) : x_h|_{\mathfrak{K}} \in P_1(\mathfrak{K}),\, \mathfrak{K} \in \mathcal{T}_h,\, x_h = 0 \text{ on } \partial \Omega \},
\]
where $P_1(\mathfrak{K})$ is the space of linear polynomials is on $\mathfrak{K}$. Let $X_h$ be the finite dimensional subspace of $U$ consisting of constant elements defined on the triangulation $\mathcal{T}_h$. Then, the semidiscrete Galerkin approximation of (\ref{eq:App_1}) is defined by the following: Find $u_h \in X_h$ such that
\begin{equation}\label{eq:App_4}
(u_{\alpha,h,t}(t), \chi) + A(u_h(t),\chi) =    (Bv_h, \chi)+ (f(t,u(t)),\chi)  \quad \forall\, \chi \in X_h,\; t \in [0,a], 
\end{equation}
\[
u_h(0) = \sum_{k=1}^{m} c_k u_h(t_k),,
\]
where, $u_h(t_k)$ is the approximation to $u(t_k)$ in $X_h$.

Let $P_h : X \rightarrow X_h$ is the $L^2$-projection and let $\{G_h(t,s)\}$ denote the finite element analogue of $G(t,s)$ defined by the semidiscrete equation (\ref{eq:App_4}) with $u_0 = 0$ and $f = 0$. This operator on $X_h$ may be defined as the semigroup generated by the discrete analog $A_h: X_h \rightarrow X_h $ of $A$, where
\[
(A_h x, \chi) = A(x, \chi) \quad \forall x, \chi \in V_h.
\]

Define the discrete analogue $f_h=f_h(t,u(t)): X_h \rightarrow X_h$ of $f=f(t,u(t))$ by

\[
(f_h(t,u(t))x,\chi) := f(t,u(t);x,\chi) \quad \forall x,\chi \in X_h,\; 0 \leq s \leq t \leq T.
\]

Now we write the semidiscrete problem (\ref{eq:App_4}) in an abstract form:
\begin{equation}\label{eq:App_5}
u_{\alpha ,h,t} + A_h u_h = P_h B v_h +f(t,u_h) =  P_h Bv_h+f_h u_h ,\quad t \in [0,T], 
\end{equation}

$$ u_h(0) = \sum_{k=1}^{m} c_k P_h u_h(t_k). $$

where $v_h(t) \in U_h$ and $u_h(t) \in X_h$. Using Duhamel’s principle, the solution $u_h$ of the semidiscrete problem (\ref{eq:App_5}) and  may be written as
\begin{align}\label{eq:App_6}
    u(t) &= \sum_{k=1}^m c_k T_{\alpha,h}(t) \mathcal{O} P_h\int_0^{t_k} S_{\alpha,h}(t_k - s) \big[Bv_h(s) + f_h(s, u_h(s))\big] \, ds 
\nonumber \\
    &\quad + \int_0^t S_{\alpha,h}(t - s) \big[Bv_h(s) + f_h(s, u_h(s))\big] \, ds, \quad t \in J.
\end{align}

\textbf{Full Discretization.} In order to discretize in the direction of $t$, we partition the $t$-axis in a uniform partition (not necessarily) by 
\[
0 = t_0 < t_1 < \cdots < t_n < \cdots < t_N \text{ with } t_N = T \text{ and set } I_n = (t_{n-1}, t_n].
\]
Let $W_N$ denote the set of scalar functions on $[0, T]$ which reduces to polynomial of degree $q - 1$ on each $I_n$. Let $Z_h^N = W_N \otimes X_h$ and $Y_h^N = W_N \otimes U_h$. In fact, $Z_h^N$ consists of functions defined on $[0, T]$ whose restrictions to $I_n$ is a polynomial of degree $\leq q - 1$ with its coefficients in $X_h$, that is, $Z_h^N$ consists of piecewise constant on each $I_n$. Similarly, $Y_h^N$ is defined for $q = 1$.

Let $\Delta t$ be the step size in time, $t_n = n\Delta t$, $n = 1, 2, \cdots, N$, where $N = T/\Delta t$. Let $\phi^n = \phi(t_n)$. For $\phi \in C([0, T])$, set

\[
{}^{C}D_t^\alpha \phi^n \approx \frac{1}{\Delta t^\alpha} \sum_{j=0}^{n} w_j^{(\alpha)} \phi^{n-j}, \quad 
w_j^{(\alpha)} = (-1)^j \binom{\alpha}{j}.
\]

\[
{}^{C}D_t^\alpha \phi^n \approx \frac{1}{\Gamma(2 - \alpha)} \sum_{j=0}^{n-1} \frac{\phi^{j+1} - \phi^j}{\Delta t^\alpha} \left[(n - j)^{1 - \alpha} - (n - j - 1)^{1 - \alpha}\right].
\]

\[
\delta_t^\alpha \phi^n \approx {}^{C}D_t^\alpha \phi(t_n).
\]

\textbf{Backward Euler Scheme:} For $\phi_h^N \in Z_h^N$ and $u_h^N \in U_h^N$ with $\phi_h^n|_{I_n} = \phi_h^n \in X_h$ and $u_h^N|_{I_n} = u_h^n \in U_h$ for $n = 1, 2, \dots, N$ and replacing the integral term by the left hand rectangular rule as

\begin{equation}\label{eq:App_7}
\int_0^{t_n} \varphi(s) ds \approx \Delta t \sum_{j=0}^{n-1} \varphi(t_j).
\end{equation}

Then the backward Euler scheme is given by

\begin{equation}\label{eq:App_8}
(\delta_t^\alpha u_h^n, \chi) + A(u_h^n, \chi) =(B v_h^n, \chi)+f_h(t;u_h^n,\chi), \quad \chi \in X_h.
\end{equation}

\[
u_h^0 = \sum_{k=1}^{m} c_k u_h(t_k) \text{ in } \Omega,
\]

Let $u_h^n = \sum_{i=1}^{N_h} \alpha_i^n \varphi_i$ and $v_h^n = \sum_{i=1}^{M_h} \beta_i^n \psi_i$, where $\{\varphi_1, \varphi_2, \dots, \varphi_{N_h} \}$ and $\{\psi_1, \psi_2, \dots, \psi_{M_h} \}$ are bases of $X_h$ and $U_h$, respectively. Note that the space $\phi_h^N$ can be identified as the space of matrices $\mathbb{M}$ of dimension $M_h \times (N + 1)$. Therefore, the  minimization problem in $\mathbb{M}$ and take $R(t,u)=I$:

Find $\beta^* \in \mathbb{M}$ such that
\begin{equation}\label{eq:App_9}
\Phi_\varepsilon(\beta^*) = \inf_{\beta \in \mathbb{M}} \left( \Phi_\varepsilon(\beta) = \frac{1}{2} \|\beta\|_M^2 + \frac{1}{2} \|u_h\|_{X_h}^2 \right)
\end{equation}

\begin{enumerate}[label=\textbf{Step \arabic*:}]
\item Start with initial guess $\beta^{(0)}$ and tolerance $\epsilon$. Solve the state system using the Caputo-L1 scheme to compute $\boldsymbol{y}_h^{(0)}$. Compute:
\[
\mathcal{G}^{(0)} = \nabla_\beta \Phi_\epsilon(\beta^{(0)}), \quad \mathcal{D}^{(0)} = -\mathcal{G}^{(0)}.
\]

\item Update the control via line search:
\[
\beta^{(i+1)} = \beta^{(i)} + \alpha^{(i)} \mathcal{D}^{(i)},
\]
where $\alpha^{(i)}$ minimizes:
\[
\Phi_\epsilon(\beta^{(i+1)} )=\Phi_\epsilon(\beta^{(i)} + \alpha^{(i)} \mathcal{D}^{(i)}).
\]

\item If $\| \beta^{(i+1)} - \beta^{(i)} \| \leq \epsilon_1$, set $\beta^{(i)} = \beta^{(i+1)}$ and go to Step 6. Otherwise, continue.

\item Compute:
\[
\mathcal{G}^{(i+1)} = \nabla_\beta \Phi_\epsilon(\beta^{(i+1)}), \quad
\zeta^{(i)} = \frac{ \langle \mathcal{G}^{(i+1)}, \mathcal{G}^{(i+1)} \rangle }{ \langle \mathcal{G}^{(i)}, \mathcal{G}^{(i)} \rangle },
\]
and update:
\[
\mathcal{D}^{(i+1)} = -\mathcal{G}^{(i+1)} + \zeta^{(i)} \mathcal{D}^{(i)}.
\]

\item Set $i := i + 1$ and return to Step 2.

\item Update $\epsilon := \epsilon + \delta$, with $\delta > 0$, and set $\beta^{(i)} = \beta^{(i+1)}$.

\item If $\| \beta^{(i+1)} - \beta^{(i)} \| \leq \epsilon_2$, then set $\beta^* = \beta^{(i)}$ and stop. Otherwise, return to Step 2.
\end{enumerate}

To find the optimal control, we use this minimizer $\beta^*$. By $u^* = \sum \beta_{ij}^* \phi_i \psi_j$, we get $u_h^* \in X_h$ as computed from the semidiscrete optimality condition (\ref{eq:App_8}). From the algorithm, it is seen that the method finds the optimal control $u_h^*$ for fixed values of $N, h$, and in the outer loop, we increment it to find the optimal control for the space-time continuous optimal control problem (1.8)(XXXXXXX).

\section{Numerical experiment}
In this section, we present a numerical experiment to illustrate the computation of the minimizer $u^*$ with the operator $B = I$, identity operator. We consider the following one-dimensional initial-boundary value problem
\begin{equation}\label{eq:App_10}
\frac{\partial^{\alpha} u(t,x)}{\partial t^\alpha} - \frac{\partial^2 u(t,x)}{\partial x^2} =  v(t,x) +f(t,x,u(t,x)), \quad \text{on } Q := (0,1) \times (0,1), 
\end{equation}
\[
y(0,x) = y_0(x), \quad y(t,0) = y(t,1) = 0 \quad x \in (0,1),\quad t \in [0,1].
\]

Set $T = 1$, $\Omega = (0,1) \subset \mathbb{R}^1$ with $B(t,s) = \exp(-(t-s)^2 - u(s))$, $y_0(x) = \sin(\pi x)$ and $g = \exp(-x^2)\sin(\pi x)$. Note that for the above problem, $B(t,s) = \exp(-(t-s)^2)$, so set of reachable states, since $\exp(-x^2)\sin(\pi x)$ is not an exact solution of the system (4.1). Any corresponding to the control function $u(t,x) = -\exp(-x^2)\pi\sin(\pi x)$ with $y(T,x) = \exp(-x^2)\sin(\pi x)$.

\begin{figure}[H]
    \centering
    \includegraphics[width=1.1\textwidth]{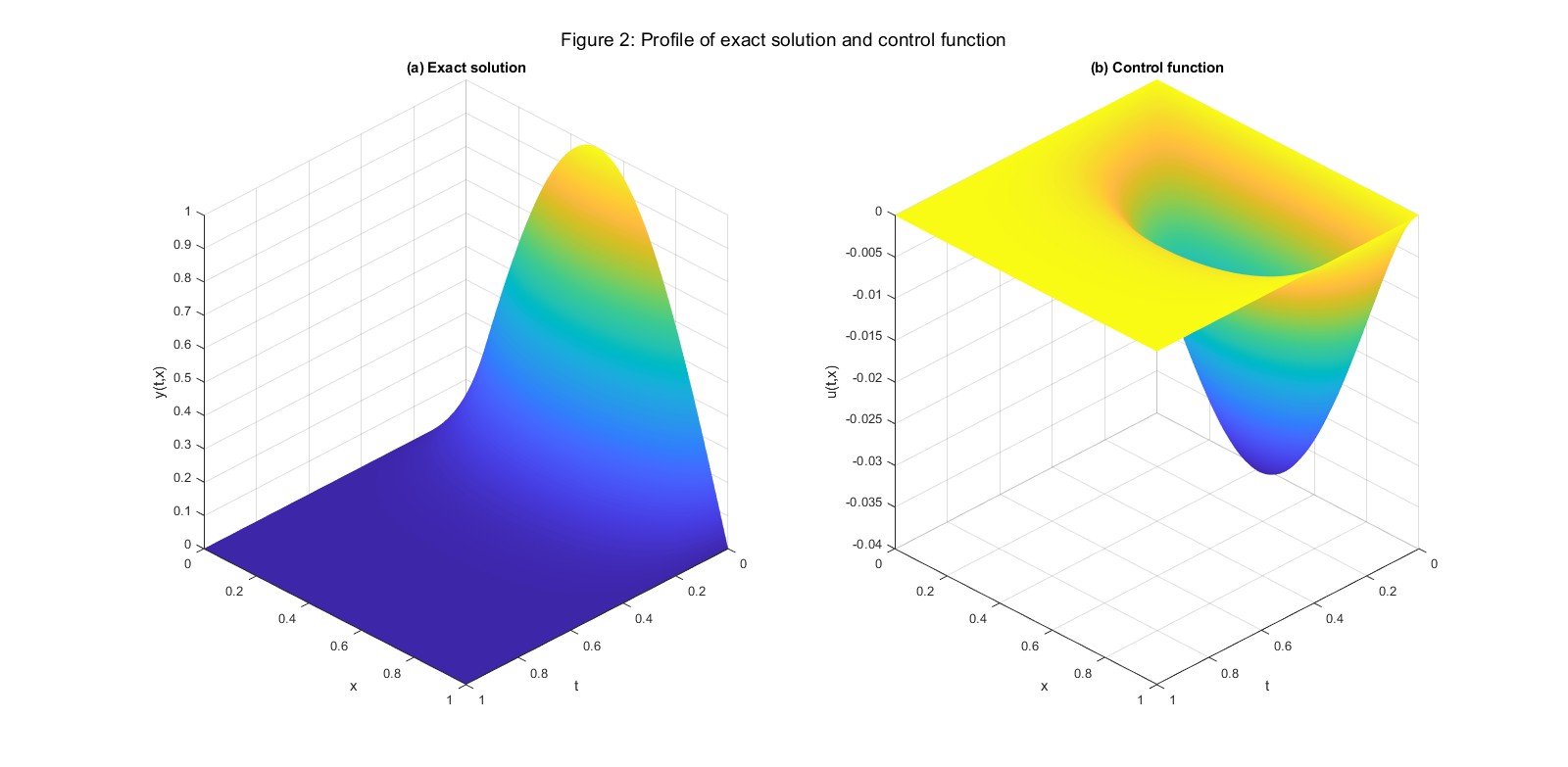}
    \caption{Profile of exact solution and control function}
    \label{fig:exact}
\end{figure}

\begin{figure}[H]
    \centering
    \includegraphics[width=1\textwidth]{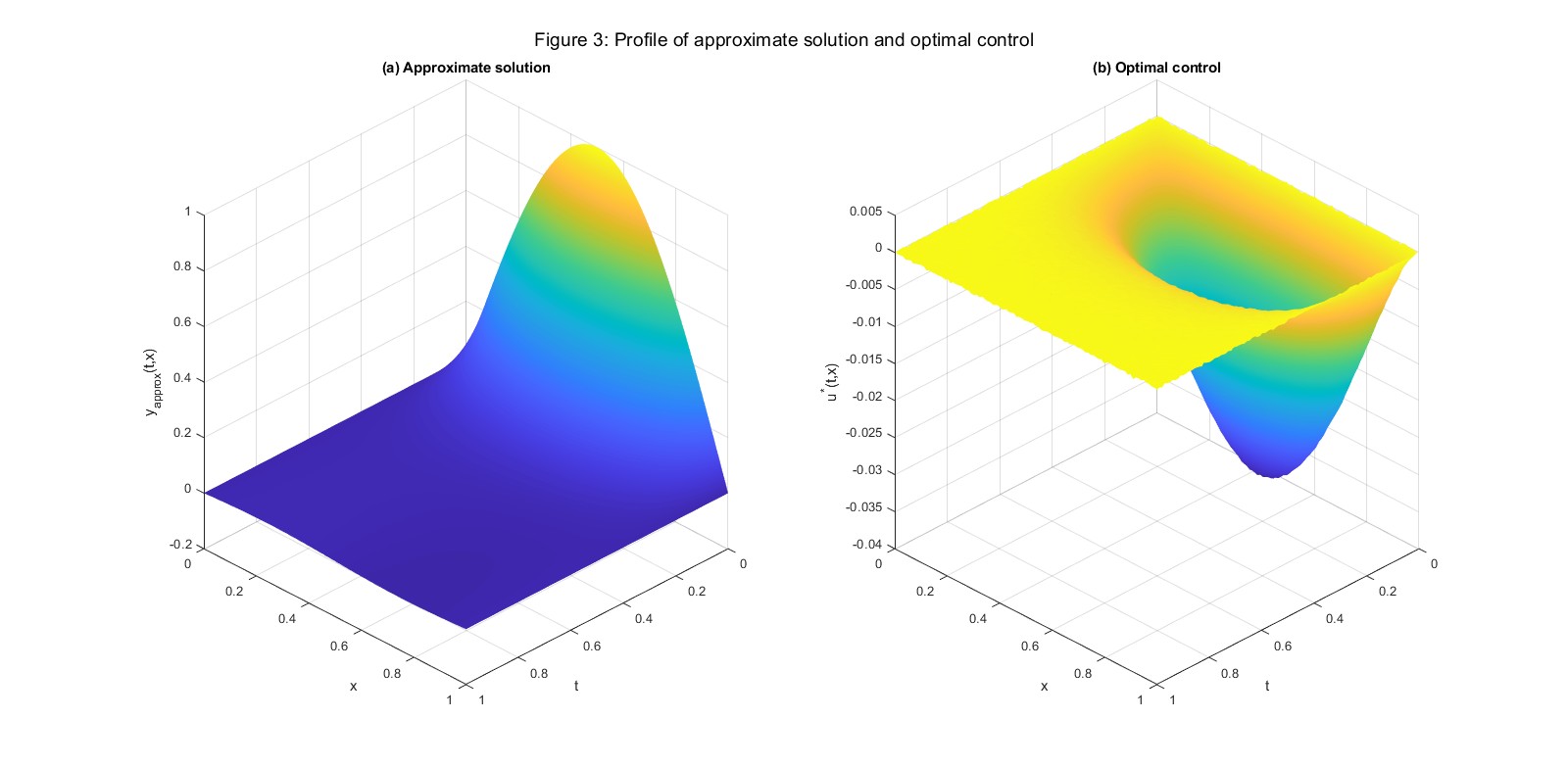}
    \caption{Profile of approximate solution and optimal control}
    \label{fig:approx}
\end{figure}

Here, we choose $\Delta t$, $h$ and $N = 1/\Delta t$. Using the MOA algorithm, we compute $u_n^*$, $n = 1, 2, \ldots, N$ and then plot the graph of numerical results for $N = 40$. In Fig. 1, we plot the graph of the approximated state at time $T = 1$ and the given final state $j = \exp(-x^2)\sin(\pi x)$ corresponding to the approximated optimal control $u^*$. The solution profile of the exact solution $y(t,x) = \exp(-t^2)\sin(\pi x)$ corresponding to the control function $u(t,x) = -\exp(-x^2)\pi\sin(\pi x)$ is shown in Fig. 2. Fig. 3 shows the solution profile of the optimal solution corresponding to the optimal control computed by using the MOA algorithm.

\bibliographystyle{unsrtnat} 
\bibliography{references} 

\end{document}